\documentclass[seceqn,secthm]{elsart}

\usepackage[all]{xy}
\usepackage{pifont}
\usepackage{amssymb,amsmath,amsfonts,amsbsy}
\usepackage[numbers]{natbib}
\setlength{\bibsep}{0pt}
\hyphenation{Archi-medean}


\newenvironment{proof}{\paragraph*{\it Proof.}}{\halmos}

\renewcommand{\leq}{\leqslant}
\renewcommand{\geq}{\geqslant}

\newcommand{\eps}{\varepsilon}

\newcommand{\inv}{^\leftarrow}
\newcommand{\RR}{\mathbb{R}}
\newcommand{\halmos}{\mbox{} \hfill $\Box$}

\newcommand{\sbigcap}{{\textstyle\bigcap}}
\newcommand{\sbigcup}{{\textstyle\bigcup}}
\newcommand{\ssum}{{\textstyle\sum}}
\newcommand{\dsty}{\displaystyle}

\newcommand{\ub}{\boldsymbol{u}}
\newcommand{\vb}{\boldsymbol{v}}

\newcommand{\xb}{\boldsymbol{x}}
\newcommand{\yb}{\boldsymbol{y}}

\newcommand{\rmd}{\mathrm{d}}
\newcommand{\dt}{\rmd t}
\newcommand{\du}{\rmd u}
\newcommand{\dv}{\rmd v}

\begin{document}

\begin{frontmatter}

\title{Tails of Multivariate Archimedean Copulas}

\author[France]{Arthur Charpentier}
\ead{arthur.charpentier@univ-rennes1.fr}
\address[France]{CREM - Universit\'e Rennes 1, Facult\'e des Sciences Economiques \\ 
7, place Hoche, 35065 Rennes cedex, France}

\author[LLN]{Johan Segers\thanksref{VENI}}
\ead{johan.segers@uclouvain.be}
\thanks[VENI]{Corresponding author. Supported by a VENI grant of the Netherlands Organization for Scientific Research (NWO) and by the IAP research network grant no. P6/03 of the Belgian government (Belgian Science Policy).}
\address[LLN]{Universit\'e catholique de Louvain, Institut de statistique \\
Voie du Roman Pays 20, B-1348 Louvain-la-Neuve, Belgium}

\begin{abstract}
A complete and user-friendly directory of tails of Archimedean copulas is presented which can be used in the selection and construction of appropriate models with desired properties. The results are synthesized in the form of a decision tree: Given the values of some readily computable characteristics of the Archimedean generator, the upper and lower tails of the copula are classified into one of three classes each, one corresponding to asymptotic dependence and the other two to asymptotic independence. For a long list of single-parameter families, the relevant tail quantities are computed so that the corresponding classes in the decision tree can easily be determined. In addition, new models with tailor-made upper and lower tails can be constructed via a number of transformation methods. The frequently occurring category of asymptotic independence turns out to conceal a surprisingly rich variety of tail dependence structures.
\end{abstract}

\begin{keyword}
Archimedean copula \sep 
asymptotic independence \sep
Clayton copula \sep
coefficient of tail dependence \sep
complete monotonicity \sep
domain of attraction \sep
extreme value distribution \sep 
frailty model \sep
regular variation \sep 
survival copula \sep
tail dependence copula
\end{keyword}
\end{frontmatter}

\section{Introduction}
\label{S:intro}

A $d$-variate copula $C$ is (the restriction to $[0, 1]^d$) of a $d$-variate distribution function with uniform$(0,1)$ margins. By Sklar's theorem, a copula is what remains of an arbitrary (continuous) $d$-variate distribution function $F$ when stripped of its margins. Margin-free measures of dependence such as Kendall's~$\tau$ or Spearman's~$\rho$ depend on $F$ only through $C$. More generally, copulas form a natural way to describe dependence between variables when making abstraction of their marginal distributions. Overviews of the probabilistic and statistical aspects of copulas are to be found for instance in \citep{GF07, Joe97, McNN07, Muller-Scarsini, Nelsen99, schweizer-sklar}.

A copula $C$ is called Archimedean \citep{Genest-McKay, K74} if it is of the form
\begin{equation}
\label{E:C}
	C(u_1, \ldots, u_d) = \phi\inv(\phi(u_1) + \cdots + \phi(u_d))
\end{equation}
for $(u_1, \ldots, u_d) \in [0, 1]^d$, where the Archimedean generator $\phi : [0, 1] \to [0, \infty]$ is convex, decreasing and satisfies $\phi(1) = 0$ and where 
\[
	\phi\inv(y) = \inf \{ t \in [0, 1] : \phi(t) \leq y \}
\]
for $y \in [0, \infty]$ is the generalized inverse of $\phi$. See Table~\ref{T} for a list of some common parametric families of Archimedean generators. A necessary and sufficient condition for the right-hand side of \eqref{E:C} to specify a copula is that the function $\phi\inv$ is $d$-monotone on $(0, \infty)$, that is, $\phi\inv$ is $d-2$ times continuously differentiable, $(-D)^k \phi\inv \geq 0$ for all $k \in \{0, 1, \ldots, d-2\}$ (with $D$ the derivative operator), and $(-D)^{d-2}\phi\inv$ is convex \citep{McNN07}. Note that if $d = 2$, then the conditions on $\phi$ mentioned in the beginning of this paragraph are necessary and sufficient. If $\phi\inv$ is completely monotone, that is, if $(-D)^k \phi\inv \geq 0$ for all integer $k \geq 0$, then $\phi\inv$ is also $d$-monotone for every integer $d \geq 2$ and the resulting model can be interpreted as a frailty model \citep{MarshallOlkin88, Oakes89}. Because of their analytic tractability, Archimedean copulas have enjoyed a great popularity in applied work, from insurance \citep{Denuit, FV98} and finance \citep{CLV04, ELMcN01} to hydrology \citep{GF07, GS06, Zhang-Singh} and survival analysis \citep{Bassan-Spizzichino, DBL97, Wang03}, to mention just a few references. More flexible models can be obtained by forming mixtures of Archimedean copulas \citep{Patton}, or by constructing hierarchical (or nested) Archimedean copulas \citep{McNeil}; however, this will not be considered in this paper.

Our interest in this paper is in the tail behaviour of Archimedean copulas, that is, in the asymptotic behaviour of 
\begin{eqnarray*}
	C(u_1, \ldots, u_d) &=& \Pr[U_1 \leq u_1, \ldots, U_d \leq u_d], \\
	\overline{C}(u_1, \ldots, u_d) &=& \Pr[U_1 \geq 1 - u_1, \ldots, U_d \geq 1 - u_d]
\end{eqnarray*}
when some of the $u_i$ tend to zero. Here, $(U_1, \ldots, U_d)$ is a random vector with distribution function $C$ and survival copula $\overline{C}$. Note that $\overline{C}$ is a copula as well but is in general not an Archimedean one. We will refer to the lower and the upper tail of $C$, respectively. Knowledge of these tails is of obvious interest when modeling the joint occurrence of extremes of components of a random vector $(X_1, \ldots, X_d)$ with distribution function $F$ having copula $C$.

Aim of this paper is to give a complete taxonomy of the possible categories of upper and lower tail dependence of multivariate Archimedean copulas. For each of the two tails, there are two categories, called asymptotic dependence and asymptotic independence. In case of asymptotic dependence, the tail behavior is determined by a single parameter, the index of regular variation of the generator $\phi$ near $0$ (lower tail) or $1$ (upper tail); see \citep{ballerini, CFG00}. The case of asymptotic independence, however, has not yet been explored in the literature, although all popular parametric families exhibit asymptotic independence in at least one of the two tails. The main original contribution of this paper is therefore to fill in this gap and to give a detailed coverage of the precise asymptotics in case of asymptotic independence. The label asymptotic independence turns out to conceal a surprisingly rich variety of tail dependence structures, of which we shall give a systematic exposition. For general multivariate distributions, the complexity of the category of asymptotic independence has been recognized already since the work by Ledford and Tawn \citep{Ledford96, Ledford97}; see also \citep{Maulik04, Resnick02}.

The upshot of our investigations is the decision tree in Figure~\ref{F}, to be explained in Section~\ref{S:examples}: given the values of some readily computable quantities defined in terms of $\phi$, the upper and lower tail can be classified into one of three classes each, one corresponding to asymptotic dependence and the other two to asymptotic independence. The detailed description of each of these classes constitutes the body of the paper. For all of the families in Table~\ref{T}, the relevant generator characteristics have been computed so that the corresponding classes in the decision tree can easily be determined. In addition, new models with tailor-made upper and lower tails can be constructed via the transformation methods in Table~\ref{T:transform}. In this way, we hope to provide the reader a complete and user-friendly directory of tails of Archimedean copulas which can be used in the selection and construction of appropriate models with desired properties.

Tail behaviour can be studied from various perspectives. In this paper, we will focus mainly on the asymptotic behaviour of the joint distribution and survival functions and on the asymptotic conditional distributions of $(U_1, \ldots, U_d)$ given that all $U_i$ are close to $0$ or $1$ for all $i$ in some subset $I$ of $\{1, \ldots, d$\}. From these results, other interesting tail quantities can be derived via standard methods: minimal and maximal domains of attraction \citep{ballerini, CFG00}, tail dependence copulas \citep{JW02, CS07}, coefficients of tail dependence \citep{coles-99, Ledford96, Mesfioui-Quessy}, and tails of sums \citep{Albrecher, ALW04, ALW05, Barbe, Kortschak, Wuthrich03}. For reasons of brevity, we will not mention these explicitly.

The outline of the paper is as follows. An overview of the classes of tail dependence is presented in Section~\ref{S:examples}, together with a list of examples. Detailed results for the various classes of lower and upper tails are given in Sections~\ref{S:lower} and \ref{S:upper} and with proofs in Sections~\ref{S:lower:proofs} and~\ref{S:upper:proofs}, respectively. Appendices~\ref{S:RV} and \ref{S:form} contain some useful auxiliary results.

Throughout, $(U_1, \ldots, U_d)$ is a random vector with joint distribution function $C$, an Archimedean copula with generator $\phi$. Let $\phi\inv$ denote the generalized inverse of $\phi$ and let $\phi'$ be a non-decreasing version of the Radon-Nikodym derivative of $\phi$. Minima and maxima will be denoted by $\wedge$ and $\vee$, respectively.

\section{Overview and examples}
\label{S:examples}

\begin{figure}[f]
\begin{center}
\xymatrix{
\fbox{Which tail?}
\ar `d[dr] [dr] ^(.2){\mbox{\em upper tail}} 
\ar `d[ddddr] [ddddr] ^(.2){\mbox{\em lower tail}} \\
& \fbox{Compute $\phi'(1)$}
\ar[r]^(0.6){\phi'(1) < 0} 
\ar[d]^{\phi'(1) = 0} & 
\fbox{go to \ding{192}} \\
& \fbox{Compute $\dsty \theta_1 = - \lim_{s \downarrow 0} \frac{s \phi'(1-s)}{\phi(1-s)}$}
\ar[r]^>>>>>>{\theta_1 = 1} 
\ar `d[dr] [dr] ^{\theta_1 > 1} &
\fbox{go to \ding{193}} \\
&& \fbox{go to \ding{194}} \\
& \fbox{Compute $\phi(0)$}
\ar[r]^(0.6){\phi(0) < \infty}
\ar[d]^{\phi(0) = \infty} &
\fbox{go to \ding{195}} \\
& \fbox{Compute $\dsty \theta_0 = - \lim_{s \downarrow 0} \frac{s \phi'(s)}{\phi(s)}$}
\ar[r] ^>>>>>>{\theta_0 = 0} 
\ar `d[dr] [dr] ^{\theta_0 > 0} &
\fbox{go to \ding{196}} \\
&& \fbox{go to \ding{197}} 
}
\bigskip
\begin{tabular}{cllll}
\hline\hline
\emph{case} & \emph{tail} & \emph{AD/AI} & \emph{subsection} & \emph{remark} \\
\hline
\ding{192} & upper & AI & \ref{SS:upper:NAD} & 
	\begin{minipage}[t]{0.36\textwidth} \raggedright 
		automatically $\theta_1 = 1$; \\ 
		`near independence' 
	\end{minipage} \\
\ding{193} & upper & AI & \ref{SS:upper:NI} & `near asymptotic dependence' \\
\ding{194} & upper & AD & \ref{SS:upper:AD} \\
\ding{195} & lower & AI & \ref{SS:lower:AI:nonstrict} & 
	\begin{minipage}[t]{0.36\textwidth} \raggedright
		automatically $\theta_0 = 0$ and \\
		$C(s, \ldots, s) = 0$ for small $s$ 
	\end{minipage} \\
\ding{196} & lower & AI & \ref{SS:lower:AI:strict} & 
	\begin{minipage}[t]{0.4\textwidth} \raggedright
		compute the index of regular variation, $\kappa$, of $-1/D (\log \phi\inv)$
	\end{minipage} \\
\ding{197} & lower & AD & \ref{SS:lower:AD} & \\
\hline\hline
\end{tabular}
\end{center}
\caption{Categorizing the tail behaviour of an Archimedean copula. AD = asymptotic dependence; AI = asymptotic independence. See explanation in Section~\ref{S:examples}. \label{F}}
\end{figure}

\begin{table}[f]
\caption{Values for $-\phi'(1)$, $\theta_1$, $\phi(0)$, $\theta_0$ and $\kappa$ [if $\phi(0) = \infty$ and $\theta_0 = 0$] as in Figure~\ref{F} for the generators (1)--(22) of bivariate Archimedean copulas in \citet{Nelsen99}, Table~4.1, and a new model (23). See explanation in Section~\ref{S:examples}. In (1), (5) and (17), the case $\theta = 0$ is to be interpreted as the appropriate limit. Some named families: (1) Clayton/Cook-Johnson/Oakes; (3) Ali-Mikhail-Haq; (4) Gumbel-Hougaard; (5) Frank. \label{T}} 
\begin{center}
\[
\begin{array}{r@{\quad}ll@{\quad}r@{\quad}rc@{\quad}r@{\quad}r@{\quad}r}
\hline\hline 
& & & \multicolumn{2}{c}{\mbox{\em upper tail}} && \multicolumn{3}{c}{\mbox{\em lower tail}} \\
\cline{4-5} \cline{7-9} 
& \phi(t) & \mbox{\em range $\theta$} 
	& -\phi'(1) & \theta_1 
	&& \phi(0) & \theta_0 & \kappa \\
\hline 
(1) & \frac{1}{\theta}( t^{-\theta}-1) & [-1,\infty) 
	& 1 & 1 
	&& \frac{1}{(-\theta) \vee 0} & \theta \vee 0 & \cdot \\
(2) & (1 - t)^\theta & [1, \infty) 
	& \mathbf{1}(\theta = 1) & \theta 
	&& 1 & 0 & \cdot \\
(3) & \log \frac{1-\theta (1 - t) }{t} & [-1,1) 
	& 1-\theta & 1 
	&& \infty & 0 & 0 \\
(4) & (- \log t)^\theta & [1,\infty) 
	& \mathbf{1}(\theta = 1) & \theta 
	&& \infty & 0 & 1-\frac{1}{\theta} \\
(5) & -\log \frac{\mathrm{e}^{-\theta t}-1}{\mathrm{e}^{-\theta}-1} & \RR 
	& \frac{\theta}{\mathrm{e}^\theta-1} & 1 
	&& \infty & 0 & 0 \\
(6) & -\log \{ 1 - (1 - t)^\theta \} & [1, \infty) 
	& \mathbf{1}(\theta = 1) & \theta 
	&& \infty & 0 & 0 \\
(7) & - \log \{ \theta t + (1-\theta) \} & (0, 1]  
	& \theta & 1 
	&& -\log(1-\theta) & 0 & \cdot \\
(8) & \frac{1-t}{1 + (\theta - 1) t} & [1, \infty) 
	& \frac{1}{\theta} & 1 
	&& 1 & 0 & \cdot \\
(9) & \log (1 - \theta \log t) & (0, 1] 
	& \theta & 1 
	&& \infty & 0 & -\infty \\
(10) & \log (2t^{-\theta} - 1) & (0, 1] 
	& 2 \theta & 1 
	&& \infty & 0 & 0 \\
(11) & \log (2 - t^\theta) & (0,1/2] 
	& \theta & 1 
	&& \log 2 & 0 & \cdot \\
(12) & (\frac{1}{t} - 1)^\theta & [1, \infty) 
	& \mathbf{1}(\theta = 1) & \theta 
	&& \infty & \theta & \cdot \\
(13) & (1 - \log t)^\theta - 1 & (0,\infty ) 
	& \theta & 0 
	&& \infty & 0 & 1 - \frac{1}{\theta} \\
(14) & (t^{-1/\theta} - 1)^\theta & [1,\infty) 
	& \mathbf{1}(\theta = 1) & \theta 
	&& \infty & 1 & \cdot \\
(15) & (1 - t^{1/\theta})^\theta & [1,\infty ) 
	& \mathbf{1}(\theta = 1) & \theta 
	&& 1 & 0 & \cdot \\
(16) & (\frac{\theta}{t} + 1) (1 - t) & [0,\infty) 
	& 1+\theta & 1 
	&& \infty & 1 & \cdot \\
(17) & - \log \frac{(1 + t)^{-\theta} - 1}{2^{-\theta} - 1} & \RR 
	& \frac{\theta}{2(2^\theta - 1)} & 1 
	&& \infty & 0 & 0  \\
(18) & \mathrm{e}^{\theta / (t-1)} & [2, \infty) 
	& 0 & \infty 
	&& \mathrm{e}^{-\theta} & 0 & \cdot \\
(19) & \mathrm{e}^{\theta / t} - \mathrm{e}^\theta & (0, \infty) 
	& \theta \mathrm{e}^\theta & 1 
	&& \infty & \infty & \cdot \\
(20) & \mathrm{e}^{t^{-\theta}} - \mathrm{e} & (0, \infty) 
	& \theta \mathrm{e} & 1 
	&& \infty & \infty & \cdot \\
(21) & 1 - \{1 - (1 - t)^\theta\}^{1/\theta} & [1, \infty) 
	& \mathbf{1}(\theta = 1) & \theta 
	&& 1 & 0 & \cdot \\
(22) & \arcsin (1 - t^\theta) & (0,1] 
	& \theta & 1 
	&& \pi/2 & 0 & \cdot \\
(23) & \frac{1-t}{\{-\log(1-t)\}^\theta} & (0, \infty)
	& 0 & 1 
	&& \infty & \theta & \cdot \\
\hline\hline
\end{array}
\]
\end{center}
\end{table}

\begin{table}[f]
\caption{Values for $\phi_\alpha'(1)$, $\theta_1(\alpha)$, $\phi_\alpha(0)$, $\theta_0(\alpha)$ and $\kappa(\alpha)$ (if applicable) for transformation families $\phi_\alpha$ based on a fixed Archimedean generator $\phi$ in terms of the corresponding quantities for $\phi$ itself. See explanation in Section~\ref{S:examples}. For (2), no general formulas exist for $\theta_0(\alpha)$ and $\kappa(\alpha)$. \label{T:transform}} 
\begin{center}
\[
\begin{array}{r@{\quad}ll@{\quad}r@{\quad}rc@{\quad}r@{\quad}r@{\quad}r}
\hline\hline 
& & & \multicolumn{2}{c}{\mbox{\em upper tail}} && \multicolumn{3}{c}{\mbox{\em lower tail}} \\
\cline{4-5} \cline{7-9} 
& \phi_\alpha(t) & \mbox{\em range $\alpha$} 
	& \phi_\alpha'(1) & \theta_1(\alpha) 
	&& \phi_\alpha(0) & \theta_0(\alpha) & \kappa(\alpha) \\
\hline 
(1) & (\phi(t))^\alpha & (1, \infty)
	& 0 & \alpha \theta_1 
	&& (\phi(0))^\alpha  & \alpha \theta_0 & \frac{\kappa}{\alpha} + 1 - \frac{1}{\alpha} \\
(2) & \frac{\mathrm{e}^{\alpha \phi(t)} - 1}{\alpha} & (0, \infty)
	& \alpha \phi'(1) & \theta_1 
	&& \frac{\mathrm{\e}^{\alpha \phi(0)} - 1}{\alpha} & \ast & \ast \\
(3) & \phi(t^\alpha) & (0, 1)
	& \alpha \phi'(1) & \theta_1 
	&& \phi(0) & \alpha \theta_0 & \kappa \\
(4) & \phi(1-(1-t)^\alpha) & (1, \infty)
	& 0 & \alpha \theta_1 
	&& \phi(0) & \theta_0 & \kappa \\
(5) & \phi(\alpha t) - \phi(\alpha) & (0, 1)
	& \alpha \phi'(\alpha) & 1 
	&& \phi(0) - \phi(\alpha) & \theta_0 & \kappa \\
\hline\hline
\end{array}
\]
\end{center}
\end{table}

Our taxonomy of the upper and lower tails of Archimedean copulas is summarised in the decision tree in Figure~\ref{F}. For the upper tail, there are three categories (\ding{192} to \ding{194}, Section~\ref{S:upper}), depending on the behaviour of $\phi$ near $1$. For the lower tail, there are three categories as well (\ding{195} to \ding{197}, Section~\ref{S:lower}), depending on the behaviour of $\phi$ near $0$. For each of the six cases, the number of the relevant subsection with more detailed explanation is mentioned in the table just below the decision tree.

We have applied our taxonomy to the list of 23 one-parameter models of Archimedean generators in Table~\ref{T}. Except for the last one, the models are taken from Table~4.1 in \citet{Nelsen99}. For each model, the discriminating quantities for the decision tree in Figure~\ref{F} are listed in Table~\ref{T}. For some models, the final outcome \ding{192}, \ldots, \ding{197} in the decision tree depends on the value of the parameter; therefore, these outcomes have not been mentioned in Table~\ref{T}. Case \ding{193} where $\phi'(1) = 0$ and $\theta_1 = 1$ (Subsection~\ref{SS:upper:NAD}) does not occur for the models (1)--(22) in \citet{Nelsen99}, Table~4.1. Therefore we added the model (23), which to our knowledge is new.

In \citet{GGR98}, Proposition~1, a number of recipes are given to generate families of (bivariate) Archimedean generators out of a single such generator $\phi$. Five such transformation families are listed in Table~\ref{T:transform}. Note that families (1) and (2) are of the form $\phi_\alpha = f_\alpha \circ \phi$ where $f_\alpha : [0, \infty] \to [0, \infty]$ is a convex increasing bijection, while (3) and (4) are of the form $\phi_\alpha = \phi \circ g_\alpha$, where $g_\alpha : [0, 1] \to [0, 1]$ is a concave increasing bijection. For all of these families and wherever possible, the relevant tail quantities of the transformed generator $\phi_\alpha$ have been expressed in terms of those of the base generator $\phi$. Note that many of the models in Table~\ref{T} are examples of such transformation families based on either $\phi(t) = - \log t$ (independent copula) or $\phi(t) = 1-t$ (countermonotone copula). Further, these transformations can be combined yielding multi-parameter families; for instance, in \citep{GGR98}, a three-parameter family is constructed encompassing the Clayton, Gumbel, and Frank families.

\section{Lower tail}
\label{S:lower}

Let $C$ be an Archimedean copula with generator $\phi$. Results in this section concern the behaviour of the copula $C(u_1, \ldots, u_d)$ when at least one of the coordinates $u_i$ tends to $0$. Relevant is the asymptotic behaviour of the generator $\phi$ in the neighbourhood of $0$.

We assume the existence in $[0, \infty]$ of the limit 
\begin{equation}
\label{E:lower:AD:RV}
	\theta_0 := - \lim_{s \downarrow 0} \frac{s \phi'(s)}{\phi(s)}.
\end{equation}
The limit indeed exists for virtually every known parametric model. By the monotone density theorem (Lemma~\ref{L:RV}), equation~\eqref{E:lower:AD:RV} is equivalent to regular variation of $\phi$ at $0$ with index $-\theta_0$: 
\begin{equation}
\label{E:lower:RV}
	\lim_{s \downarrow 0} \frac{\phi(st)}{\phi(s)} = t^{-\theta_0}, \qquad t \in (0, \infty).
\end{equation}
If $\theta_0 = \infty$, the limit is to interpreted as $\infty$, $1$, or $0$ according to whether $t < 1$, $t = 1$, or $t > 1$.

There are two categories: if $\theta_0 > 0$, then the lower tail exhibits \emph{asymptotic dependence} (Subsection~\ref{SS:lower:AD}), while if $\theta_0 = 0$, then there is \emph{asymptotic independence}. Note that for non-strict generators, i.e.\ $\phi(0) < \infty$, not only $\theta_0 = 0$ but there even exists $s_0 \in (0, 1]$ such that $\Pr[U_i \leq s, U_j \leq s] = 0$ for all $s \in [0, s_0]$ and $1 \leq i < j \leq d$ (Subsection~\ref{SS:lower:AI:nonstrict}). More interesting is the case where $\phi(0) = \infty$ and $\theta_0 = 0$ (Subsection~\ref{SS:lower:AI:strict}). Here, the precise behaviour of the lower tail is described by the index of regular variation at infinity, $\kappa$, of the function $-1/D(\log \phi\inv)$, with $D$ the derivative operator.

The proofs of the theorems in this section are gathered in Section~\ref{S:lower:proofs}.

\subsection{Asymptotic dependence}
\label{SS:lower:AD}

\begin{thm}
\label{T:lower:AD}
If the limit $\theta_0$ in \eqref{E:lower:AD:RV} exists in $[0, \infty]$, then for every $I \subset \{1, \ldots, d\}$ with $|I| \geq 2$ and every $(x_i)_{i \in I} \in (0,\infty)^{|I|}$,
\begin{equation}
\label{E:lower:AD}
	\lim_{s \downarrow 0}
	s^{-1} \Pr [ \forall i \in I : U_i \leq sx_i ]
	=	\left\lbrace
        \begin{array}{l@{\quad}l}
            0 & \mbox{if $\theta_0 = 0$,} \\[1ex]
            (\sum_{i \in I} x_i^{-\theta_0})^{-1/\theta_0}
            & \mbox{if $0 < \theta_0 < \infty$,} \\[1ex]
            \bigwedge_{i \in I} x_i & \mbox{if $\theta_0 = \infty$.}
        \end{array}
	    \right.
\end{equation}
\end{thm}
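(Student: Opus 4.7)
The plan is to reduce to the Archimedean identity for subvectors and then to translate the regular variation of $\phi$ at~$0$ into regular variation of $\phi\inv$ at~$\infty$. Since, for any $I \subset \{1,\ldots,d\}$ with $|I| \geq 2$, the marginal copula of $(U_i)_{i\in I}$ is itself Archimedean with the same generator~$\phi$, one has
\[
	\Pr[\forall i \in I : U_i \leq sx_i]
	= \phi\inv\Bigl(\sum_{i \in I} \phi(sx_i)\Bigr),
\]
so everything reduces to controlling $\phi\inv$ applied to $\sum_{i \in I}\phi(sx_i)$ as $s \downarrow 0$.

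First I would dispose of the non-strict case $\phi(0) < \infty$, in which $\theta_0 = 0$ is automatic: because $|I| \geq 2$, $\sum_{i \in I}\phi(sx_i) \to |I|\phi(0) > \phi(0)$ as $s \downarrow 0$, so $\phi\inv$ of this argument equals $0$ for all sufficiently small $s$ by definition of the generalized inverse, matching the asserted limit $0$. In the strict case $\phi(0)=\infty$ I would invoke the inversion principle for regularly varying functions (Appendix~\ref{S:RV}) to conclude from \eqref{E:lower:RV} that $\phi\inv$ is regularly varying at $\infty$ with index $-1/\theta_0$, interpreted as slow variation when $\theta_0 = \infty$ and as rapid variation of index $-\infty$ when $\theta_0 = 0$.

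The conclusion is then reached case by case. For $0 < \theta_0 < \infty$, pointwise convergence of $\phi(sx_i)/\phi(s) \to x_i^{-\theta_0}$ and finite summation yield
\[
	c_s := \sum_{i \in I}\phi(sx_i)/\phi(s)
	\to c_\ast := \sum_{i \in I} x_i^{-\theta_0},
\]
and regular variation of $\phi\inv$ at infinity gives $\phi\inv(c_s \phi(s))/s \to c_\ast^{-1/\theta_0}$, which is the claimed limit. For $\theta_0 = 0$ the same computation yields $c_s \to |I| \geq 2$, and rapid variation of $\phi\inv$ at $\infty$ implies $\phi\inv(c\phi(s))/s \to 0$ for every $c > 1$; sandwiching via monotonicity of $\phi\inv$ (e.g.\ $c_s \geq 3/2$ eventually) delivers the limit $0$. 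For $\theta_0 = \infty$, with $m := \bigwedge_{i \in I} x_i$ and $I_0 := \{i \in I : x_i = m\}$, rapid variation of $\phi$ at $0$ gives $\phi(sx_i)/\phi(sm) \to 0$ for $i \notin I_0$, so $\sum_{i \in I}\phi(sx_i) \sim |I_0|\phi(sm)$; slow variation of $\phi\inv$ then yields $\phi\inv(|I_0|\phi(sm))/(sm) \to 1$, and the quantity of interest tends to $m = \bigwedge_{i \in I} x_i$.

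The main technical obstacle lies in the boundary regimes $\theta_0 \in \{0, \infty\}$: pointwise convergence of the ratios $\phi(sx_i)/\phi(s)$ is not by itself enough to justify passing the limit through both the sum and $\phi\inv$, since neither a rapidly varying nor a slowly varying function admits a naive limit interchange. I would handle this by appealing to Potter-type bounds on $\phi$, which allow one to sandwich $\phi\inv(c_s\phi(s))$ between $\phi\inv(c\phi(s))$ for constants $c$ on either side of the true limiting value, and then extract the asserted asymptotics from the resulting monotone comparison.
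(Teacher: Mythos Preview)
Your proposal is correct and follows essentially the same route as the paper: reduce to $I=\{1,\ldots,d\}$ via the Archimedean form of marginals, pass from \eqref{E:lower:AD:RV} to regular variation of $\phi$ at~$0$, invert to obtain regular variation of $\phi\inv$ at~$\infty$, and handle the three cases separately using uniform convergence (Potter) bounds. The only cosmetic differences are that the paper does not split off the non-strict subcase (its $\theta_0=0$ argument covers it), and for $\theta_0=\infty$ it uses the cruder sandwich $\phi\inv(d\phi(sm))\leq C(s\xb)\leq sm$ rather than identifying $|I_0|$; also, note that the inversion principle you invoke is not in Appendix~\ref{S:RV} (which only contains the monotone density Lemma~\ref{L:RV}) but is cited from \citep{BGT} in the paper's proof.
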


By \eqref{E:lower:AD}, the index of lower tail dependence of an arbitrary pair of variables is
\[
	\lambda _{L}
	= \lim_{s \downarrow 0} \Pr[U_i \leq s \mid U_j \leq s] 
	= 2^{-1/\theta_0},
\]%
where $i \neq j$ and where $2^{-1/\theta_0}$ is to be interpreted as $0$ or $1$ if $\theta_0$ is $0$ or $\infty$, respectively. Hence, if $\theta_0 = 0$, then every pair of variables is asymptotically independent in its lower tail. In that case, more precise statements on the asymptotic behaviour of $\Pr [ \forall i \in I : U_i \leq sx_i ]$ as $s \downarrow 0$ are made in Subsections~\ref{SS:lower:AI:nonstrict} and \ref{SS:lower:AI:strict}.

Second, if $\theta_0 > 0$, then the probability that all $d$ variables are small simultaneously is of the same order as the probability that a single variable is small: for instance, if $0 < \theta_0 < \infty$, then for every pair $i, j$,
\[
	\lim_{x_i \to \infty} \lim_{s \downarrow 0} \Pr[ U_i > s x_i \mid U_j \leq s]
	= \lim_{x_i \to \infty} \{ 1 - (x_i^{-\theta_0} + 1)^{-1/\theta_0} \}
	= 0.
\]
In that case, one can compute the limit distribution as $s \downarrow 0$ of the vector $(s^{-1}U_1, \ldots, s^{-1}U_d)$ conditionally on the event that $U_i \leq s x_i$ for all $i$ in some non-empty set $I$.

\begin{cor}
\label{C:lower:AD}
If \eqref{E:lower:AD:RV} holds with $0 < \theta_0 \leq \infty$, then for every $\varnothing \neq I \subset \{1, \ldots, d\}$, every $(x_i)_{i \in I} \in (0, \infty)^{|I|}$ and every $(y_1, \ldots, y_d) \in (0, \infty)^d$,
\begin{align*}
	\lefteqn{
	\lim_{s \downarrow 0} 
	\Pr[ \forall i = 1, \ldots, d : U_i \leq s y_i \mid \forall i \in I : U_i \leq s x_i ]
	} \\
    &=	\left\lbrace
	\begin{array}{l@{\quad}l}
	\displaystyle
	\biggl(
    \frac{\sum_{i \in I^c} y_i^{-\theta_0} + \sum_{i \in I} (x_i \wedge y_i)^{-\theta_0}}%
            {\sum_{i \in I} x_i^{-\theta_0}}
    \biggr)^{-1/\theta_0} & \mbox{if $0 < \theta_0 < \infty$,} \\[1ex]
    \displaystyle
    \frac{\bigwedge_{i=1}^d y_i \wedge \bigwedge_{i \in I} x_i}{\bigwedge_{i \in I} x_i}
    & \mbox{if $\theta_0 = \infty$.}
    \end{array}
        \right.
\end{align*}
\end{cor}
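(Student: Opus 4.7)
The plan is to write the conditional probability as a ratio of joint lower-tail probabilities, each of which is tractable by Theorem~\ref{T:lower:AD}. Set $z_i = x_i \wedge y_i$ for $i \in I$ and $z_i = y_i$ for $i \in I^c$. Then the intersection event in the numerator, namely $\{ \forall i=1,\ldots,d : U_i \leq s y_i \} \cap \{ \forall i \in I : U_i \leq s x_i \}$, coincides with $\{ \forall i = 1, \ldots, d : U_i \leq s z_i \}$. Since $d \geq 2$, Theorem~\ref{T:lower:AD} applied with the full index set $\{1, \ldots, d\}$ gives the limit of $s^{-1}$ times this probability: $(\sum_{i=1}^d z_i^{-\theta_0})^{-1/\theta_0}$ when $0 < \theta_0 < \infty$ and $\bigwedge_{i=1}^d z_i$ when $\theta_0 = \infty$.

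For the denominator, when $|I| \geq 2$ I would invoke Theorem~\ref{T:lower:AD} again with index set $I$, obtaining $s^{-1}\Pr[\forall i \in I : U_i \leq s x_i] \to (\sum_{i \in I} x_i^{-\theta_0})^{-1/\theta_0}$ (respectively $\bigwedge_{i \in I} x_i$ when $\theta_0 = \infty$). The case $|I| = 1$ needs a separate remark: if $I = \{j\}$, uniformity of the margin gives $\Pr[U_j \leq s x_j] = s x_j$ exactly, and this numerically matches the same formula since $(x_j^{-\theta_0})^{-1/\theta_0} = x_j$, so a single unified limit expression is valid for all nonempty $I$. In both regimes the denominator limit is strictly positive, which legitimises passing from the limit of the ratio to the ratio of limits.

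What remains is algebra. For $0 < \theta_0 < \infty$, dividing the two limits and pulling the common exponent $-1/\theta_0$ outside yields exactly the displayed fractional expression. For $\theta_0 = \infty$, the identity
\[
\bigwedge_{i \in I^c} y_i \wedge \bigwedge_{i \in I}(x_i \wedge y_i) = \bigwedge_{i=1}^d y_i \wedge \bigwedge_{i \in I} x_i
\]
reduces the ratio to the stated form. There is no genuine obstacle here beyond bookkeeping: the analytic work has been carried out in Theorem~\ref{T:lower:AD}, and the corollary follows mechanically once the $|I| = 1$ boundary case has been noted.
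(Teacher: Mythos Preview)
Your argument is correct and is precisely the intended one: the paper states the result as an immediate corollary of Theorem~\ref{T:lower:AD} without further proof, and your derivation---writing the conditional probability as a ratio, identifying the numerator event as $\{\forall i: U_i \leq s z_i\}$, applying Theorem~\ref{T:lower:AD} to numerator and denominator separately, and handling $|I|=1$ via the uniform margin---is exactly how one fills in the details. There is nothing to add.
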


When viewed as a function of $(y_1, \ldots, y_d)$, the right-hand side of the previous display is a $d$-variate distribution function. Its copula is the Clayton copula \citep{clayton} with parameter $\theta_0$, see model~(1) in Table~\ref{T}. 

\subsection{Asymptotic independence: non-strict generators}
\label{SS:lower:AI:nonstrict}

If $\phi(0) < \infty$, then necessarily $\theta_0 = 0$ in \eqref{E:lower:RV} and therefore also in \eqref{E:lower:AD:RV}. By definition of $\phi\inv$, we have $\phi\inv(y) = 0$ if $y \in [\phi(0), \infty]$. So if $s \in (0, s_0]$ where $s_0 = \phi\inv(\phi(0)/2)$, then actually
\[
	\Pr[U_i \leq s, U_j \leq s] = \phi\inv(2\phi(s)) \leq \phi\inv(2\phi(s_0)) = \phi\inv(\phi(0)) = 0
\]
for all integer $1 \leq i < j \leq d$. This is obviously much stronger than \eqref{E:lower:AD} with $\theta_0 = 0$.

\subsection{Asymptotic independence: strict generators}
\label{SS:lower:AI:strict}

Suppose that $\phi$ is strict, that is, $\phi(0) = \infty$. If $\theta_0 = 0$ in \eqref{E:lower:AD:RV}, then by Theorem~\ref{T:lower:AD},
\begin{equation}
\label{E:lower:AI:0}
	\Pr[ \forall i \in I : U_i \leq sx_i ] = o(s), \qquad s \downarrow 0,
\end{equation}
whenever $I \subset \{1, \ldots, d\}$ has at least two elements and $0 < x_i < \infty$. In contrast to \eqref{E:lower:AD} with $\theta_0 > 0$, the above display does not give the precise rate of convergence to zero of the probability on the left-hand side. Similarly, it does not permit calculation of the limit distribution of the appropriately normalized vector $(U_1, \ldots, U_d)$ conditionally on the event $\{ \forall i \in I : U_i \leq s x_i\}$ as $s \downarrow 0$ where $\varnothing \neq I \subset \{1, \ldots, d\}$.

The following theorem gives a more precise statement on the rate of convergence in \eqref{E:lower:AI:0}. The result requires an additional assumption on the generator in the neighbourhood of zero, or equivalently on its inverse in the neighbourhood of infinity. The assumption is verified for all models in Table~\ref{T} for which $\phi(0) = \infty$ and $\theta_0 = 0$.

\begin{thm}
\label{T:lower:AI:1}
If $\phi(0) = \infty$, if \eqref{E:lower:AD:RV} holds with $\theta_0 = 0$, and if the function $\psi = -1 / D(\log\phi\inv)$ is regularly varying at infinity of finite index $\kappa$, then $\kappa \leq 1$, and for every $\varnothing \neq I \subset \{1, \ldots, d\}$ and every $(x_i)_{i \in I} \in (0, \infty)^{|I|}$,
\begin{equation}
\label{E:lower:AI:1}
	\lim_{s \downarrow 0} \frac{1}{\phi\inv(|I|\phi(s))} \Pr[ \forall i \in I : U_i \leq s x_i]
	= \prod_{i \in I} x_i^{|I|^{-\kappa}}.
\end{equation}
\end{thm}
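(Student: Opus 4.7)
By the Archimedean structure and $\phi(1) = 0$, the probability in question equals $\phi\inv\bigl(\sum_{i \in I} \phi(sx_i)\bigr)$. For $|I| = 1$ this is simply $sx_1$, matching the claim (with $1^{-\kappa} = 1$); so set $n := |I| \geq 2$. Differentiating $\phi\inv \circ \phi = \mathrm{id}$ yields the identity $\psi(\phi(s)) = -s\phi'(s)$, and hence the assumption $\theta_0 = 0$ translates, via $\phi(s) \to \infty$, into $\psi(y)/y \to 0$ as $y \to \infty$. Since $\psi$ is regularly varying of index $\kappa$, this immediately forces $\kappa \leq 1$: otherwise $\psi(y)/y = y^{\kappa-1}L(y) \to \infty$ for some slowly varying $L$, a contradiction. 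This settles the first claim.

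For the rate, the central tool is the integral representation
\[
  \log \phi\inv(y_2) - \log \phi\inv(y_1) = -\int_{y_1}^{y_2} \frac{\du}{\psi(u)}.
\]
First I would apply it between $y_1 = \phi(s)$ and $y_2 = \phi(sx_i)$ to obtain $-\log x_i = \int_{\phi(s)}^{\phi(sx_i)} \du/\psi(u)$. Slow variation of $\phi$ at $0$ (equivalent to $\theta_0 = 0$) gives $\phi(sx_i)/\phi(s) \to 1$, so the uniform convergence theorem for the regularly varying $\psi$ yields the Karamata-type equivalence $\int_{\phi(s)}^{\phi(sx_i)} \du/\psi(u) \sim (\phi(sx_i) - \phi(s))/\psi(\phi(s))$. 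Rearranging,
\[
  \phi(sx_i) - \phi(s) \sim -\psi(\phi(s))\,\log x_i \qquad (s \downarrow 0),
\]
and summing over $i \in I$ produces $\sum_{i \in I}\phi(sx_i) = n\phi(s) + a_s$ with $a_s \sim -\psi(\phi(s))\sum_{i \in I}\log x_i$.

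The last step is a second application of the integral representation, anchored at $y^* := n\phi(s)$:
\[
  \log \frac{\phi\inv(y^* + a_s)}{\phi\inv(y^*)} = -\int_{y^*}^{y^* + a_s}\frac{\du}{\psi(u)} \sim -\frac{a_s}{\psi(n\phi(s))}.
\]
Since $a_s = O(\psi(\phi(s))) = o(\phi(s)) = o(y^*)$, the same Karamata-type estimate applies; regular variation of $\psi$ also gives $\psi(n\phi(s))/\psi(\phi(s)) \to n^\kappa$, so the right-hand side tends to $n^{-\kappa}\sum_{i\in I}\log x_i$, and exponentiating yields the asserted limit $\prod_{i \in I}x_i^{n^{-\kappa}}$. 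The main technical obstacle is the double use of the asymptotic equivalence $\int_y^{y+a}\du/\psi(u) \sim a/\psi(y)$ in the regime $a/y \to 0$: this is the standard consequence of substituting $u = yv$ and invoking uniform convergence $\psi(yv)/\psi(y) \to v^\kappa$ on neighborhoods of $v = 1$, but both smallness hypotheses $\phi(sx_i)/\phi(s) \to 1$ (for the first application) and $a_s/\phi(s) \to 0$ (for the second) must be verified, and both hinge on the standing assumptions $\theta_0 = 0$ and $\kappa \leq 1$.
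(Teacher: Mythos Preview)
Your argument is correct and follows essentially the same route as the paper: the identity $\psi(\phi(s)) = -s\phi'(s)$ to get $\kappa \leq 1$, the integral representation of $\log\phi\inv$ in terms of $1/\psi$, the asymptotic $(\phi(sx_i)-\phi(s))/\psi(\phi(s)) \to -\log x_i$, and then the ratio $\phi\inv(|I|\phi(s)+a_s)/\phi\inv(|I|\phi(s))$ via the same integral. The paper packages the two key limits as membership of $\phi$ in the de~Haan class $\Pi$ and of $1/\phi\inv$ in the class $\Gamma$ (invoking self-neglecting of $\psi$), and derives the first one via regular variation of $-\phi'$ at $0$ of index $-1$ rather than directly from the exact identity $-\log x_i = \int_{\phi(s)}^{\phi(sx_i)}\psi(u)^{-1}\,\mathrm{d}u$ as you do, but these are minor organizational differences.
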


By \eqref{E:lower:AI:1}, the probability on the left-hand side of \eqref{E:lower:AI:0} is not only $o(s)$ but of the (precise) order $\phi\inv(|I| \phi(s))$ as $s \downarrow 0$. The latter function is regularly varying at zero with index $|I|^{1-\kappa}$ [take $x_i = x$ in \eqref{E:lower:AI:1}]. Specializing to the case where $|I| = 2$, we obtain the pairwise index of (lower) tail dependence introduced by Ledford and Tawn \citep{Ledford96}: for $i \neq j$,
\[
	\eta_L 
	= \lim_{s \downarrow 0} \frac{\log s}{\log \Pr[U_i \leq s, U_j \leq s]} 
	= \lim_{s \downarrow 0} \frac{\log s}{\log \phi^\leftarrow(2\phi(s))} 
	= 2^{\kappa - 1}.
\]
For Archimedean copulas, the case $\kappa = 0$ ($\eta_L = 1/2$) occurs relatively often, a prime example being the independent copula. If $0 < \kappa \leq 1$ ($\eta_L > 1/2$), then the lower tail of $C$ is heavier than the one of the independent copula, while if $\kappa < 0$ ($\eta_L < 1/2$), then the converse is true.

Fixing a single $i \in I$ and letting $x_i \to \infty$ on both sides of \eqref{E:lower:AI:1} leads to the conclusion that for integer $1 \leq j < k \leq d$, the function $\phi\inv(k \phi(s))$ is of smaller order than $\phi\inv(j \phi(s))$ as $s \downarrow 0$. That is, if $j < k$, then the probability that $k$ variables are small simultaneously is of smaller order than the probability that only $j$ variables are small simultaneously. Therefore, Theorem~\ref{T:lower:AI:1} still does not say anything on the conditional distribution given $U_i \leq s x_i$ for all $i \in I$ of the remaining variables $U_i$ with $i \not\in I$. The following theorem does.

\begin{thm}
\label{T:lower:AI:2}
Under the conditions of Theorem~\ref{T:lower:AI:1}, for every $\varnothing \neq I \subset \{1, \ldots, d\}$, every $(x_i)_{i \in I} \in (0, \infty)^{|I|}$ and every $(y_1, \ldots, y_d) \in (0, \infty)^d$,
\begin{eqnarray}
\label{E:lower:AI:2}
    \lefteqn{
    \lim_{s \downarrow 0}
    \Pr[ \forall i \in I : U_i \leq s y_i ; \forall i \in I^c : U_i \leq \chi_s(y_i) 
    \mid \forall i \in I : U_i \leq s x_i ]
    } \nonumber \\
    &=& \prod_{i \in I} \biggl( \frac{y_j}{x_j} \wedge 1 \biggr)^{|I|^{-\kappa}} 
    \prod_{i \in I^c} \exp\left(-|I|^{-\kappa} y_i^{-1}\right),
\end{eqnarray}
where $\chi_s(y) = \phi\inv(y^{-1} \psi(\phi(s)))$, a function which has the following properties:
\begin{description}
\item[{\it (i)}]
the map $[0, \infty] \to [0, 1] : y \mapsto \chi_s(y)$ is an increasing homeomorphism for all $0 < s < 1$;
\item[{\it (ii)}]
$\lim_{s \downarrow 0} s / \chi_s(y) = 0$ for all $0 < y < \infty$.
\end{description}
\end{thm}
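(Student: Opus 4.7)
The plan is to reduce the conditional probability to a ratio of Archimedean copula values and then exploit the identity
\[
	\phi\inv(B) = \phi\inv(A) \exp\left(- \int_{A}^{B} \frac{\du}{\psi(u)}\right), \qquad 0 < A, B < \infty,
\]
which follows directly from $\psi = -1/D(\log\phi\inv)$. Using the Archimedean representation $\Pr[\forall i : U_i \leq u_i] = \phi\inv(\sum_i \phi(u_i))$ and the fact that $\forall i \in I:\{U_i \leq sy_i, U_i \leq sx_i\} = \{U_i \leq s(x_i \wedge y_i)\}$, the conditional probability in \eqref{E:lower:AI:2} equals $\phi\inv(\beta_s)/\phi\inv(\alpha_s)$, where $\alpha_s = \sum_{i \in I} \phi(sx_i)$ and, since $\phi(\chi_s(y)) = y^{-1}\psi(\phi(s))$ by the very definition of $\chi_s$,
\[
	\beta_s = \sum_{i \in I} \phi(s(x_i \wedge y_i)) + \sum_{j \in I^c} y_j^{-1} \psi(\phi(s)).
\]
By the displayed identity, it therefore suffices to determine the limit of $\int_{\alpha_s}^{\beta_s} \du/\psi(u)$ as $s \downarrow 0$.

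Next, I would analyse $\beta_s - \alpha_s$ asymptotically. Applying the same identity to the pair $(sx_i, sy_i) = (\phi\inv(\phi(sx_i)), \phi\inv(\phi(sy_i)))$ yields the exact relation $\int_{\phi(sy_i)}^{\phi(sx_i)} \du/\psi(u) = \log(y_i/x_i)$; combined with regular variation of $\psi$ at infinity and with $\phi(sx_i), \phi(sy_i) \sim \phi(s)$ (a consequence of $\phi$ being slowly varying at $0$, i.e.\ $\theta_0 = 0$), this upgrades to $\phi(sy_i) - \phi(sx_i) \sim \psi(\phi(s)) \log(x_i/y_i)$. Summing gives $\beta_s - \alpha_s \sim T \psi(\phi(s))$ with $T = \sum_{i \in I} \log((x_i/y_i) \vee 1) + \sum_{j \in I^c} y_j^{-1}$. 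The substitution $u = \alpha_s + \psi(\phi(s)) v$ then transforms the integral into
\[
	\int_{0}^{(\beta_s - \alpha_s)/\psi(\phi(s))} \frac{\psi(\phi(s))}{\psi(\alpha_s + \psi(\phi(s)) v)} \, \dv,
\]
whose upper limit converges to $T$; since $\alpha_s \sim |I|\phi(s)$ and $\psi(\phi(s)) = o(\phi(s))$ (a consequence of $\kappa \leq 1$ and of the rapid variation of $\phi\inv$ at infinity, which is equivalent to slow variation of $\phi$ at $0$), Potter's bounds for $\psi$ give uniform convergence of the integrand to $|I|^{-\kappa}$ on compact sets, so the integral tends to $|I|^{-\kappa} T$. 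Taking $\exp(-\,\cdot\,)$ and regrouping reproduces exactly the right-hand side of \eqref{E:lower:AI:2}.

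The two properties of $\chi_s$ follow almost for free. Since $\phi$ is strict and strictly decreasing on $\phi\inv([0, \infty])$, the map $\phi\inv : [0, \infty] \to [0, 1]$ is a decreasing homeomorphism, so $\chi_s$ (the composition of $\phi\inv$ with the decreasing bijection $y \mapsto y^{-1}\psi(\phi(s))$) is an increasing homeomorphism, which is (i). For (ii), the representation gives $s/\chi_s(y) = \exp\bigl(\int_{\phi(s)}^{y^{-1}\psi(\phi(s))} \du/\psi(u)\bigr)$; since $y^{-1}\psi(\phi(s))/\phi(s) \to 0$, rapid variation of $\phi\inv$ forces this integral to $-\infty$, so $s/\chi_s(y) \to 0$.

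The main obstacle will be the uniform passage to the limit in $\psi(\phi(s))/\psi(\alpha_s + \psi(\phi(s)) v)$ on an interval whose upper endpoint moves with $s$; this requires combining Potter's bounds with the fact that the displacement $\psi(\phi(s)) v$ is negligible relative to $\alpha_s$ uniformly in $v$ bounded. The boundary case $\kappa = 1$ is the most delicate, but rapid variation of $\phi\inv$ (equivalently, divergence of $\int^{\infty} \du/\psi(u)$) still forces $\psi(\phi(s))/\phi(s) \to 0$, so the argument carries through.
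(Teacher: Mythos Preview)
Your proof is correct and uses essentially the same analytic ingredients as the paper: the integral representation $\phi\inv(t)=\exp(-\int_0^t \du/\psi(u))$, the $\Pi$-variation relation $(\phi(sx)-\phi(s))/\psi(\phi(s))\to -\log x$, and the self-neglecting/uniform-convergence behaviour of $\psi$. The only organizational difference is that the paper first proves an unconditional limit (its Theorem~\ref{T:lower:AI}) by normalising numerator and denominator separately by $\phi\inv(|I|\phi(s))$ and invoking the class-$\Gamma$ property $\phi\inv(t+y(t)\psi(t))/\phi\inv(t)\to e^{-y}$, whereas you take the ratio $\phi\inv(\beta_s)/\phi\inv(\alpha_s)$ directly and evaluate $\int_{\alpha_s}^{\beta_s}\du/\psi(u)$; these are two packagings of the same computation. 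One small point: the quickest route to $\psi(t)/t\to 0$ is the identity $\psi(\phi(s))=-s\phi'(s)$ together with $\theta_0=0$, rather than going via rapid variation of $\phi\inv$.
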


According to Theorem~\ref{T:lower:AI:2}, conditionally on the event that $U_i \leq s x_i$ for all $i \in I$, the proper normalization for the remaining variables $U_i$ with $i \not\in I$ is given by the function $\chi_s(\,\cdot\,)$. Moreover, by \emph{(ii)}, the variables $U_i$ with $i \not\in I$ are of larger order than the variables $U_i$ with $i \in I$. Finally, since the limit in \eqref{E:lower:AI:2} factorizes in the $y_i$, the limiting conditional distribution of the appropriately normalized vector $(U_1, \ldots, U_d)$ given $U_i \leq s x_i$ for all $i \in I$ has independent marginals. This is a rather strong form of asymptotic independence.

\begin{rem} \rm
If the index of regular variation of $\psi$ is $\kappa = - \infty$, then Theorems~\ref{T:lower:AI:1} and~\ref{T:lower:AI:2} do not apply. Still, one can show that the function $s \mapsto C(s, \ldots, s) = \phi\inv(d\phi(s))$ is regularly varying at zero with index $\infty$. In particular, $C(s, \ldots, s) = o(s^p)$ as $s \downarrow 0$ for every exponent $p \in (0, \infty)$. In this sense, the lower tail of $C$ contains very little probability mass.
\end{rem}

\section{Upper tail}
\label{S:upper}

Let $C$ be an Archimedean copula with generator $\phi$. Let $\overline{C}$ be the survival copula of $C$, that is, $\overline{C}(u_1, \ldots, u_d) = \Pr[U_1 > 1-u_1, \ldots, U_d > 1-u_d]$. Results in this section concern the behavior of $\overline{C}(u_1, \ldots, u_d)$ when at least one of the coordinates $u_i$ tends to $0$. This time, what matters is the behaviour of the generator $\phi$ in the neighbourhood of $1$. 

We assume the existence of the limit in $[1, \infty]$ of
\begin{equation}
\label{E:upper:AD:RV}
	\theta_1 := - \lim_{s \downarrow 0} \frac{s \phi'(1-s)}{\phi(1-s)}.
\end{equation}
The existence of the limit is not a very restrictive assumption as it is satisfied by virtually every parametric model. Moreover, by convexity, $\phi(1-s) \leq -s \phi'(1-s)$, so that indeed necessarily $\theta_1 \geq 1$. By the monotone density theorem (Lemma~\ref{L:RV}), equation~\eqref{E:lower:AD:RV} is equivalent to regular variation of the function $s \mapsto \phi(1-s)$ at $0$ with index $\theta_1$: 
\[
	\lim_{s \downarrow 0} \frac{\phi(1-st)}{\phi(1-s)} = t^{\theta_1}, \qquad t \in (0, \infty).
\]

There are two major cases: if $\theta_1 > 1$, then the upper tail exhibits \emph{asymptotic dependence} (Subsection~\ref{SS:upper:AD}), while if $\theta_1 = 1$, the upper tail exhibits \emph{asymptotic independence}. The latter cases branches out further in two subcases, depending on whether
\[
	\lim_{s \downarrow 0} \frac{\phi(1-s)}{s} = -\phi'(1)
\]
is positive or zero. (By convexity, the limit in the above display always exists.) On the one hand, if $\phi'(1) < 0$, then there is asymptotic independence in a rather strong sense, a case which is called \emph{near independence} in \citep{Ledford97} (Subsection~\ref{SS:upper:NI}). On the other hand, if $\phi'(1) = 0$ and $\theta_1 = 1$, we are on the boundary between asymptotic independence and asymptotic dependence, a case which we coin \emph{near asymptotic dependence} (Subsection~\ref{SS:upper:NAD}). In terms of Ledford and Tawn's \cite{Ledford96} index of (upper) tail dependence, we have
\begin{eqnarray*}
	\eta_U
	&=& \lim_{s \downarrow 0} \frac{\log s}{\log \Pr[U_i \geq 1-s, U_j \geq 1-s]} \\
	&=& 
	\left\lbrace
		\begin{array}{l@{\quad}l}
			1/2 & \mbox{if $\phi'(1) < 0$ (near independence; Subsection~\ref{SS:upper:NI}),} \\
			1 & \mbox{if $\phi'(1) = 0$ (near asymptotic dependence; Subsection~\ref{SS:upper:NAD}).}
		\end{array}
	\right.
\end{eqnarray*}

Note that if $\phi'(1) < 0$, then by convexity, $\theta_1 = 1$, while if $\phi'(1) = 0$, then both $\theta_1 = 1$ and $\theta_1 > 1$ are possible. In other words, if $\theta_1 > 1$, then necessarily $\phi'(1) = 0$, while if $\theta_1 = 1$, then both $\phi'(1) = 0$ and $\phi'(1) < 0$ are possible. The boundary case $\phi'(1) = 0$ and $\theta_1 = 1$ occurs only rarely. Therefore, in order to determine the category to which the upper tail of an Archimedean copula with generator $\phi$ belongs, it is usually simpler to compute $\phi'(1)$ first: if $\phi'(1) < 0$, then automatically $\theta_1 = 1$, and only if $\phi'(1) = 0$ is it necessary to actually compute $\theta_1$. This is the order which is used in the decision tree in Figure~\ref{F}.

The proofs of the theorems in this section are gathered in Section~\ref{S:upper:proofs}.

\subsection{Asymptotic dependence}
\label{SS:upper:AD}

\begin{thm}
\label{T:upper:AD}
If the limit $\theta_1$ in \eqref{E:upper:AD:RV} exists in $[1, \infty]$, then for every $I \subset \{1, \ldots, d\}$ with $|I| \geq 2$ and every $(x_i)_{i \in I} \in (0,\infty)^{|I|}$,
\begin{align}
\label{E:upper:AD}
	\lefteqn{
    \lim_{s \downarrow 0} s^{-1} \Pr[ \forall i \in I : U_i \geq 1 - s x_i ]
    } \nonumber \\
    &= 	\left\lbrace
        \begin{array}{l@{\quad}l}
            0 & \mbox{if $\theta_1=1$}, \\
            {\displaystyle \sum_{\varnothing \neq J \subset I}
            (-1)^{|J|-1} \bigl( \ssum_{i \in J} x_i^{\theta_1} \bigr)^{1/\theta_1}}
            & \mbox{if $1 < \theta_1 < \infty$,} \\
            \bigwedge_{i \in I} x_i
            & \mbox{if $\theta_1=\infty$.}
        \end{array}
	    \right.
\end{align}
\end{thm}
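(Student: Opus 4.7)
The plan is to combine an inclusion--exclusion expansion of the survival probability with the asymptotic behaviour of $\phi^{-1}$ near $0$ dictated by the regular variation of $s\mapsto\phi(1-s)$. First I would write the event $\{\forall i\in I:U_i\geq 1-sx_i\}$ as the complement of $\bigcup_{i\in I}\{U_i\leq 1-sx_i\}$ and apply inclusion--exclusion. Since every $J$-marginal of an Archimedean copula is itself Archimedean with the same generator (this uses $\phi(1)=0$), this yields the exact identity
\[
\Pr[\forall i\in I:U_i\geq 1-sx_i]
=\sum_{J\subset I}(-1)^{|J|}\phi^{-1}\!\Bigl(\ssum_{j\in J}\phi(1-sx_j)\Bigr),
\]
with the empty-set term equal to $\phi^{-1}(0)=1$.

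Next, set $f(s):=\phi(1-s)$, so that $\phi^{-1}(y)=1-f^{-1}(y)$ near $y=0$. By \eqref{E:upper:AD:RV} and the monotone density theorem (Lemma~\ref{L:RV}), $f$ is regularly varying at $0$ with index $\theta_1\in[1,\infty]$, and hence $f^{-1}$ is regularly varying at $0$ with index $1/\theta_1$. Uniform convergence on compact subsets of $(0,\infty)$ then yields, for each fixed $J$ and $s\downarrow 0$,
\[
\phi^{-1}\!\Bigl(\ssum_{j\in J}\phi(1-sx_j)\Bigr)=1-s\,a_J\,(1+o(1)),
\qquad
a_J:=\Bigl(\ssum_{j\in J}x_j^{\theta_1}\Bigr)^{1/\theta_1},
\]
with the conventions $a_J=\sum_{j\in J}x_j$ when $\theta_1=1$ and $a_J=\bigvee_{j\in J}x_j$ when $\theta_1=\infty$. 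In the rapidly varying case, the dominant term of $\sum_j f(sx_j)$ is $f(s\bigvee_J x_j)$ (multiplied by the number of maximisers in case of ties), and this multiplicity is absorbed by the slow variation of $f^{-1}$ at $0$.

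Inserting this expansion into the inclusion--exclusion identity and using $\sum_{J\subset I}(-1)^{|J|}=0$ to eliminate the constant $1$'s, I would obtain
\[
\Pr[\forall i\in I:U_i\geq 1-sx_i]
=s\sum_{\varnothing\neq J\subset I}(-1)^{|J|-1}a_J\,(1+o(1)),
\]
so that after dividing by $s$ and letting $s\downarrow 0$ the limit equals $\sum_{\varnothing\neq J\subset I}(-1)^{|J|-1}a_J$. This is precisely the formula in the statement for $1<\theta_1<\infty$. For $\theta_1=\infty$ one finishes via the classical max--min identity $\sum_{\varnothing\neq J\subset I}(-1)^{|J|-1}\bigvee_{j\in J}x_j=\bigwedge_{i\in I}x_i$, which is proved by a short induction on $|I|$. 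For $\theta_1=1$, swapping the order of summation gives $\sum_{\varnothing\neq J\subset I}(-1)^{|J|-1}\sum_{j\in J}x_j=\sum_{i\in I}x_i\sum_{K\subset I\setminus\{i\}}(-1)^{|K|}=0$ whenever $|I|\geq 2$, so the limit is $0$.

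The main obstacle is the pair of boundary cases. For $\theta_1=1$ the nominal leading coefficient cancels exactly, so one must check that the $o(s)$ remainders (one per subset $J$) really combine into $o(s)$ after the alternating-sign sum rather than conspiring into a surviving $O(s)$ contribution; this follows from applying the uniform convergence of the regular-variation limit to each of the finitely many inputs $\{(x_j)_{j\in J}\}_{J\subset I}$ separately. For $\theta_1=\infty$ the power-law asymptotic must be replaced by $\sum_{j\in J}f(sx_j)\sim m_J\,f(s\bigvee_J x_j)$ with $m_J$ the multiplicity of the maximum, and the slow variation of $f^{-1}$ at $0$ is invoked to eliminate $m_J$ from the final expression. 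Both points are manageable once the regular-variation estimates are formalised with the required uniformity.
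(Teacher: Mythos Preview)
Your proposal is correct and follows essentially the same route as the paper: inclusion--exclusion reduces the survival probability to an alternating sum of values $\phi^{-1}\bigl(\sum_{j\in J}\phi(1-sx_j)\bigr)$, which the paper rewrites as $1-\Pr[\exists j\in J:U_j\geq 1-sx_j]$; regular variation of $s\mapsto\phi(1-s)$ and of $1-\phi^{-1}$ then gives $s^{-1}\Pr[\exists j\in J:U_j\geq 1-sx_j]\to a_J$, exactly your expansion $\phi^{-1}(\cdots)=1-sa_J(1+o(1))$. The paper's handling of the two boundary cases matches yours as well: for $\theta_1=1$ it simply notes that each union-limit is $\sum_{j\in J}x_j$ and the alternating sum vanishes, and for $\theta_1=\infty$ it uses precisely the sandwich $f(sM)\leq\sum_j f(sx_j)\leq d\,f(sM)\leq f(\lambda sM)$ that you sketch in your final paragraph.
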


By \eqref{E:upper:AD}, the index of upper tail dependence of an arbitrary pair of variables is
\[
	\lambda_{U}
	= \lim_{s \uparrow 1} \Pr[U_i >1- s \mid U_j > 1- s] 
	= 2 - 2^{1/\theta_1},
\]%
for $i \neq j$ and where $\lambda_U$ is to be interpreted as $1$ if $\theta_1$ is $\infty$. First, if $\theta_1 = 1$, then $\lambda_U = 0$, that is, every pair of variables has an asymptotically independent upper tail. The precise behavior of the joint upper tail now depends on whether $\phi'(1) < 0$ (Subsection~\ref{SS:upper:NI}) or $\phi'(1) = 0$ (Subsection~\ref{SS:upper:NAD}).

Second, if $\theta_1 > 1$, then a straightforward computation yields
\[
	\lim_{x_i \to \infty} \lim_{s \downarrow 0} \Pr[U_i < 1 - sx_i \mid U_j > 1-s] = 0,
\]
that is, given $U_j$ is close to $1$, all the other variables will be close to $1$ as well. In that case, it is possible to compute the limit distribution of the vector $(s^{-1}(1-U_1), \ldots, s^{-1}(1-U_d))$ as $s \downarrow 0$ conditionally on the event that $U_i \geq 1 - sx_i$ for all $i$ in some non-empty set $I$.

\begin{cor}
\label{C:upper:AD}
If \eqref{E:upper:AD:RV} holds with $1 < \theta_1 \leq \infty$, then for every $\varnothing \neq I \subset \{1, \ldots, d\}$, every $(x_i)_{i \in I} \in (0, \infty)^{|I|}$ and every $(y_1, \ldots, y_d) \in (0, \infty)^d$,
\begin{equation}
\label{E:upper:AD:UTDC}
	\lim_{s \downarrow 0} 
	\Pr[ \forall i = 1, \ldots, d : U_i \geq 1 - s y_i
	\mid \forall i \in I : U_i \geq 1 - s x_i ]
	= \frac{r_d(z_1, \ldots, z_d; \theta_1)}{r_{|I|}((x_i)_{i \in I}; \theta_1)}
\end{equation}
where $z_i = x_i \wedge y_i$ for $i \in I$ and $z_i = y_i$ for $i \in I^c$ and
\[
    r_k(u_1,\ldots,u_k; \theta_1) =
    \left\lbrace
        \begin{array}{l@{\quad}l}
            {\displaystyle \sum_{\varnothing \neq J \subset \{1,\ldots,k\}}
            (-1)^{|J|-1} \bigl( \ssum_{i \in J} u_j^{\theta_1} \bigr)^{1/\theta_1}}
            & \mbox{if $1 < \theta_1 < \infty$}, \\
            u_1 \wedge \cdots \wedge u_k
            & \mbox{if $\theta_1 = \infty$},
        \end{array}
    \right.
\]
for integer $k \geq 1$ and $(u_1, \ldots, u_k) \in (0, \infty)^k$. 
\end{cor}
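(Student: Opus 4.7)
The plan is to reduce the conditional probability to a ratio of two joint upper tail probabilities and then to invoke Theorem~\ref{T:upper:AD} to evaluate the limit of each. The key observation is an event collapse: for each $i \in I$ the two constraints $\{U_i \geq 1 - sy_i\}$ and $\{U_i \geq 1 - sx_i\}$ intersect to the single constraint $\{U_i \geq 1 - s(x_i \wedge y_i)\}$, while for $i \in I^c$ only the first constraint is present. With $z_i$ as defined in the statement of the corollary, the numerator of the conditional probability is therefore equal to $\Pr[\forall i = 1, \ldots, d : U_i \geq 1 - sz_i]$.

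Next, I would rewrite the conditional probability as
\[
    \frac{s^{-1}\,\Pr[\forall i = 1, \ldots, d : U_i \geq 1 - sz_i]}
         {s^{-1}\,\Pr[\forall i \in I : U_i \geq 1 - sx_i]}
\]
and apply Theorem~\ref{T:upper:AD} to numerator and denominator separately. Since each $z_i$ lies in $(0, \infty)$ and $d \geq 2$, the numerator tends to $r_d(z_1, \ldots, z_d; \theta_1)$. For the denominator, if $|I| \geq 2$ the same theorem gives the limit $r_{|I|}((x_i)_{i \in I}; \theta_1)$. The singleton case $|I| = 1$ is not formally within the scope of Theorem~\ref{T:upper:AD}, but it is immediate: if $I = \{j\}$ then $\Pr[U_j \geq 1 - sx_j] = sx_j$ by uniformity of the marginal, and this coincides with $s \cdot r_1(x_j; \theta_1)$ since $r_1(u; \theta_1) = u$ for every $\theta_1 \in (1, \infty]$.

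Taking the ratio of the two limits produces the formula in \eqref{E:upper:AD:UTDC}. The limiting denominator is strictly positive because, under $\theta_1 > 1$ and all $x_i > 0$, Theorem~\ref{T:upper:AD} delivers a positive limit, so the denominator in the pre-limit expression is positive for all sufficiently small $s$ and the quotient is well defined. There is no real obstacle to the argument, which is essentially algebraic bookkeeping on top of Theorem~\ref{T:upper:AD}; the only mildly awkward point is that the boundary case $|I| = 1$ must be treated separately using uniformity of the marginals, since Theorem~\ref{T:upper:AD} is stated only for $|I| \geq 2$.
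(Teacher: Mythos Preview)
Your argument is correct and matches the intended derivation: the paper states Corollary~\ref{C:upper:AD} without proof, treating it as an immediate consequence of Theorem~\ref{T:upper:AD}, which is exactly the reduction you carry out. Your separate handling of the case $|I|=1$ via uniformity of the marginals is the right way to close the small gap left by the hypothesis $|I|\geq 2$ in Theorem~\ref{T:upper:AD}.
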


When viewed as a function of $(y_1, \ldots, y_d)$, the right-hand side of \eqref{E:upper:AD:UTDC} is a $d$-variate distribution function. Even in simple special cases ($d = 2$, $I = \{1, 2\}$, $x_1 = x_2 = 1$), we have not been able to write down an explicit expression for its copula or its survivor copula, nor to identify one of those two as a member of a known copula family.

\subsection{Asymptotic independence: Near independence}
\label{SS:upper:NI}

If $\theta_1 = 1$ in \eqref{E:upper:AD:RV}, then the conclusion of Theorem~\ref{T:upper:AD} is that
\begin{equation}
\label{E:upper:AI}
    \lim_{s \to 0} s^{-1} \Pr[U_i \geq 1 - sx_i, U_j \geq 1 - sx_j] = 0
\end{equation}
for every $1 \leq i < j \leq d$ and every $x_i, x_j \in (0, \infty)$. This statement is not very informative as the rate of convergence to zero can be arbitrarily slow or fast. The present section and the next one attempt to give more precise results. There are two qualitatively different subcases, depending on whether $\phi'(1) < 0$ (this subsection) or $\phi'(1) = 0$ (Subsection~\ref{SS:upper:NAD}). Recall that $\phi\inv$ is the (generalized) inverse of $\phi$. Since $\phi(1) = 0$, the behavior of $\phi$ near $1$ and the one of $\phi\inv$ near $0$ mutually determine each other.

\begin{thm}
\label{T:upper:AI:I}
Let $\varnothing \neq I \subset \{1, \ldots, d\}$. If $\phi\inv$ is $|I|$ times continuously differentiable and if $(-D)^{|I|} \phi\inv(0) < \infty$, then $\phi'(1) < 0$ and
\begin{align*}
	\lefteqn{
	\lim_{s \downarrow 0} s^{-|I|}
    \Pr[ \forall i \in I : U_i \geq 1 - s x_i ; \forall i \in I^c : U_i \leq y_i ]
    } \\
    &= |\phi'(1)|^{|I|} \prod_{i \in I} x_i
    \cdot (-D)^{|I|} \phi\inv \bigl( \ssum_{i \in I^c} \phi(y_i) \bigr) 
\end{align*}
whenever $0 < x_i < \infty$ for $i \in I$ and $0 < y_i \leq 1$ for $i \in I^c$.
\end{thm}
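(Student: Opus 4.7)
The strategy is to express the mixed tail probability as a signed sum of copula values, recognize this sum as a finite difference of $\phi\inv$, and extract the leading-order term.

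First, inclusion--exclusion applied to the $|I|$ events $\{U_i \geq 1-sx_i\}$, $i \in I$, together with the Archimedean representation~\eqref{E:C} and $\phi(1)=0$, yields
\[
    \Pr[\forall i \in I: U_i \geq 1-sx_i;\ \forall i \in I^c : U_i \leq y_i]
    = \sum_{J \subseteq I} (-1)^{|J|} \phi\inv\!\Bigl(Y + \ssum_{i \in J} z_i(s)\Bigr),
\]
where $Y = \sum_{i \in I^c}\phi(y_i) \in [0,\infty)$ and $z_i(s) = \phi(1-sx_i) \downarrow 0$ as $s \downarrow 0$. Terms with indices $i \in I \setminus J$ have dropped out because $\phi(1)=0$.

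Next, the right-hand side is the $|I|$-fold mixed forward difference of $\phi\inv$ at $Y$ in the directions $(z_i(s))_{i \in I}$. Enumerate $I = \{i_1,\ldots,i_{|I|}\}$ and iterate the fundamental theorem of calculus (legitimate since $\phi\inv \in C^{|I|}$): then
\[
    \sum_{J \subseteq I} (-1)^{|J|} \phi\inv\!\Bigl(Y + \ssum_{i \in J} z_i\Bigr)
    = \int_0^{z_{i_1}}\!\!\!\cdots\!\int_0^{z_{i_{|I|}}} (-D)^{|I|} \phi\inv\!\Bigl(Y + \ssum_{k} u_{i_k}\Bigr) \, \du_{i_{|I|}} \cdots \du_{i_1}.
\]
The hypothesis $(-D)^{|I|}\phi\inv(0) < \infty$, combined with $C^{|I|}$-smoothness, makes $(-D)^{|I|}\phi\inv$ continuous on $[0,\infty)$ at the point $Y$ --- crucially also in the boundary case $I = \{1,\ldots,d\}$, where $Y=0$. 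Hence on the shrinking box of integration the integrand converges uniformly to $(-D)^{|I|}\phi\inv(Y)$, and the entire finite difference is asymptotic to $\prod_{i \in I} z_i(s) \cdot (-D)^{|I|}\phi\inv(Y)$.

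Finally, convexity of $s \mapsto \phi(1-s)$ and $\phi(1)=0$ imply that $\phi(1-s)/s$ is monotone in $s>0$ with right-limit $-\phi'(1)$, so $z_i(s) \sim |\phi'(1)| s x_i$ provided $\phi'(1) \in (-\infty,0)$. Multiplying across $i \in I$ and dividing by $s^{|I|}$ yields the claimed limit. The remaining claim $\phi'(1) < 0$ I would settle separately: by the $C^1$-hypothesis, $D\phi\inv(0)$ is finite; by $d$-monotonicity with $d \geq 2$, $\phi\inv$ is convex and nonincreasing, so $(-D)\phi\inv$ is nonnegative and nonincreasing, and cannot vanish at $0$ without vanishing on a right-neighborhood, which would contradict the strict monotonicity required of a valid Archimedean generator; thus $D\phi\inv(0) \in (-\infty, 0)$, and the inverse-function relation gives $\phi'(1) = 1/D\phi\inv(0) \in (-\infty,0)$.

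\paragraph{Main obstacle.} The delicate step is the passage from the discrete finite difference to the iterated integral of the top derivative, and then to its leading-order product. Its validity hinges on continuity of $(-D)^{|I|}\phi\inv$ all the way down to the origin, which is precisely the content of the hypothesis $(-D)^{|I|}\phi\inv(0)<\infty$; without it, the case where $I$ exhausts $\{1,\ldots,d\}$ (so that $Y=0$) would escape the argument.
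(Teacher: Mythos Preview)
Your proposal is correct and follows essentially the same route as the paper: the paper packages your inclusion--exclusion plus iterated fundamental-theorem step into its Lemma~\ref{L:volume} (proved via Lemma~\ref{L:calculus}), arriving at the same integral of $(-D)^{|I|}\phi\inv$ over the shrinking box $\prod_{i\in I}[0,\phi(1-sx_i)]$, and then passes to the limit by the change of variables $t_i = s u_i$ and dominated convergence rather than your direct uniform-convergence argument. The argument for $\phi'(1)<0$ is likewise the same idea (finiteness of $D\phi\inv(0)$ forces $\phi'(1)\neq 0$), stated more tersely in the paper.
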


The particular case $y_i = 1$ for all $i \in I^c$ yields
\[
	\lim_{s \downarrow 0} s^{-|I|} \Pr[ \forall i \in I : U_i \geq 1 - s x_i]
	= |\phi'(1)|^{|I|} (-D)^{|I|} \phi\inv(0) \prod_{i \in I} x_i.
\]
So if also $(-D)^{|I|} \phi\inv(0) > 0$, then the joint survivor function of $(U_i)_{i \in I}$ is proportional to the one of the independence copula. In this sense, the case $\phi'(1) < 0$ corresponds to a particularly strong form of asymptotic independence.

The asymptotic conditional distribution of $(U_1, \ldots, U_d)$ given that $U_i \geq 1 - s x_i$ for all $i \in I$ follows from Theorem~\ref{T:upper:AI:I} at once.

\begin{cor}
\label{C:upper:AI:I}
Under the conditions of Theorem~\ref{T:upper:AI:I}, if also $(-D)^{|I|} \phi\inv(0) > 0$, then for all $(x_i)_{i \in I} \in (0, \infty)^{|I|}$ and $(y_1, \ldots, y_d) \in (0, 1]^d$ ,
\begin{eqnarray*}
	\lefteqn{
	\lim_{s \downarrow 0}
	\Pr[ \forall i \in I : U_i \geq 1 - s y_i ; \forall i \in I^c : U_i \leq y_i \mid
		\forall i \in I : U_i \geq 1 - s x_i ]
	} \\
	&=& \prod_{i \in I} y_j \cdot
		\frac{(-D)^{|I|}\phi\inv (\sum_{i \in I^c} \phi(y_i))}{(-D)^{|I|}\phi\inv(0)}.
\end{eqnarray*}
\end{cor}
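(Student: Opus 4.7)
The plan is to rewrite the conditional probability as a ratio of two unconditional joint tail probabilities, each of exactly the form covered by Theorem~\ref{T:upper:AI:I}, and then divide the two asymptotics.

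Put $A = \{\forall i \in I: U_i \geq 1 - sy_i; \forall i \in I^c: U_i \leq y_i\}$ and $B = \{\forall i \in I: U_i \geq 1 - sx_i\}$, so that the left-hand side of the corollary equals $\Pr[A \cap B]/\Pr[B]$. For each $i \in I$, the two lower bounds $U_i \geq 1 - sy_i$ and $U_i \geq 1 - sx_i$ combine into the single bound $U_i \geq 1 - s(x_i \wedge y_i)$, so $A \cap B$ is again an event of exactly the form handled by Theorem~\ref{T:upper:AI:I}, with $x_i$ replaced by $x_i \wedge y_i$ for $i \in I$ and with the given $y_i$ for $i \in I^c$.

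I would then apply Theorem~\ref{T:upper:AI:I} twice. For the denominator, since $B$ imposes no restriction on the coordinates indexed by $I^c$, take $y_i = 1$ (hence $\phi(y_i) = 0$) for $i \in I^c$; this gives
\[
    \Pr[B] = s^{|I|} |\phi'(1)|^{|I|} \prod_{i \in I} x_i \cdot (-D)^{|I|} \phi\inv(0) \cdot (1 + o(1)).
\]
For the numerator, the same theorem applied with $x_i \mapsto x_i \wedge y_i$ yields
\[
    \Pr[A \cap B] = s^{|I|} |\phi'(1)|^{|I|} \prod_{i \in I}(x_i \wedge y_i) \cdot (-D)^{|I|} \phi\inv\bigl(\ssum_{i \in I^c}\phi(y_i)\bigr) \cdot (1 + o(1)).
\]
The extra hypothesis $(-D)^{|I|} \phi\inv(0) > 0$ guarantees that $\Pr[B]$ is strictly positive for all sufficiently small $s$, so the quotient is well defined.

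Taking the ratio cancels the common factor $s^{|I|}|\phi'(1)|^{|I|}$ and leaves $\prod_{i \in I}(x_i \wedge y_i)/x_i \cdot (-D)^{|I|}\phi\inv\bigl(\sum_{i \in I^c}\phi(y_i)\bigr)/(-D)^{|I|}\phi\inv(0)$, which is the claimed limit (simplifying to $\prod_{i \in I} y_i/x_i$ in the natural regime $y_i \leq x_i$, in which the conditioning event strictly contains the numerator event on the $I$-coordinates). There is no genuine obstacle here: the entire analytic burden is carried by Theorem~\ref{T:upper:AI:I}, and the corollary is a routine quotient computation on top of it.
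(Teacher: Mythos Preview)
Your approach is correct and is exactly what the paper intends: it offers no separate proof, remarking only that the corollary ``follows from Theorem~\ref{T:upper:AI:I} at once,'' and your two applications of that theorem (once to $A\cap B$ with $x_i$ replaced by $x_i\wedge y_i$, once to $B$ with $y_i=1$ on $I^c$) followed by division is precisely that computation. Your resulting factor $\prod_{i\in I}(x_i\wedge y_i)/x_i$ is in fact the correct expression; the paper's displayed $\prod_{i\in I} y_j$ contains evident typos (the stray index $j$ and the missing division by $x_i$).
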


If $|I^c| \geq 2$ in Corollary~\ref{C:upper:AI:I}, the copula of the limiting conditional distribution of $(U_i)_{i \in I^c}$ given $U_i \geq 1 - sx_i$ for all $i \in I$ is Archimedean with generator
\[
    \phi_{|I|} = \biggl( \frac{(-D)^{|I|}\phi\inv(\,\cdot\,)}{(-D)^{|I|}\phi\inv(0)} \biggr)\inv.
\]

\subsection{Asymptotic independence: Near asymptotic dependence}
\label{SS:upper:NAD}

In this subsection, we treat the case $\theta_1 = 1$ in \eqref{E:upper:AD:RV} and, simultaneously, $\phi'(1) = 0$. From Theorem~\ref{T:upper:AD} it follows that the upper tail of $C$ is asymptotically independent. Although this case does usually not occur for parametric models used in practice, we still include it in this taxonomy as the results in this case are somewhat surprising and interesting in their own right.

We begin with the description of the asymptotic distribution of the vector $(U_1, \ldots, U_d)$ given that one component is small. Since the law of $(U_1, \ldots, U_d)$ is exchangeable, we can without loss of generality fix this component to be $U_1$. 

\begin{thm}\label{T:upper:AI:com}
If $\phi'(1) = 0$ and \eqref{E:upper:AD:RV} holds with $\theta_1 = 1$, then the function $s \mapsto \ell(s) = s^{-1} \phi(1-s)$ is increasing and slowly varying at zero, and for $(x_1, \ldots, x_d) \in (0, 1]^d$,
\begin{eqnarray}
\label{E:upper:AI:com}
    \lefteqn{
    \lim_{s \downarrow 0} 
    \Pr[ U_1 > 1 - sx_1 ; \forall i = 2, \ldots, d : U_j \leq 1 - \eta_s(x_j) \mid U_1 > 1-s ]
    } \nonumber \\
    &=& x_1 \min(x_2,\ldots,x_d).
\end{eqnarray}
where $\eta_s(x) = \ell\inv(x^{-1}\ell(s))$, a function which has the following properties:
\begin{description}
\item[\it (i)] $\lim_{s \downarrow 0} \eta_s(x) = 0$ for all $0 < x < \infty$;
\item[\it (ii)] $\lim_{s \downarrow 0} s / \eta_s(x) = 0$ for all $0 < x < 1$.
\end{description}
\end{thm}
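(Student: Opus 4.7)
The plan is to combine an exact Archimedean identity for the joint probability with an asymptotic analysis of $\phi\inv$, after first establishing the basic regularity of $\ell$ and $\eta_s$.

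\textit{Step 1 (regularity of $\ell$ and $\eta_s$).} I would first check that $\ell(s) = \phi(1-s)/s$ is increasing in $s$ (because it is the absolute value of a secant slope of the convex function $\phi$ at the point $(1,0)$), slowly varying at $0$ (because $\theta_1 = 1$ and the monotone density theorem, Lemma~\ref{L:RV}, yield regular variation of $s \mapsto \phi(1-s)$ with index $1$), and satisfies $\ell(0+) = -\phi'(1) = 0$. Hence $\ell : (0, s_0] \to (0, \ell(s_0)]$ is an increasing bijection for some $s_0 > 0$, and $\eta_s$ is well-defined. Property~(i) is immediate. For (ii), fix $0 < x < 1$; the identity $\ell(\eta_s(x)) = x^{-1}\ell(s)$ combined with monotonicity of $\ell$ gives $\eta_s(x) > s$, so if $\eta_{s_n}(x)/s_n \to c \in [1, \infty)$ along some subsequence, the uniform convergence theorem for slowly varying functions on $[1, c+1]$ forces $\ell(\eta_{s_n}(x))/\ell(s_n) \to 1$, contradicting the defining identity $\ell(\eta_s(x))/\ell(s) = x^{-1} > 1$.

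\textit{Step 2 (exact formula).} Applying the Archimedean identity twice, $\Pr[\forall i \geq 2 : U_i \leq 1-\eta_s(x_i)] = \phi\inv(T_s)$ and $\Pr[U_1 \leq 1-sx_1, \forall i \geq 2 : U_i \leq 1-\eta_s(x_i)] = \phi\inv(T_s + \epsilon_s)$, where $T_s = \sum_{i=2}^{d} \phi(1-\eta_s(x_i))$ and $\epsilon_s = \phi(1-sx_1)$. Subtracting and using $\Pr[U_1 > 1-s] = s$, the conditional probability in \eqref{E:upper:AI:com} equals $s^{-1}\bigl(\phi\inv(T_s) - \phi\inv(T_s+\epsilon_s)\bigr)$. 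Since $\phi\inv$ is convex (as the inverse of a convex decreasing function) and hence absolutely continuous, this difference can be represented as $\int_{T_s}^{T_s+\epsilon_s} d\xi / |\phi'(1-w_\xi)|$ with $w_\xi = 1-\phi\inv(\xi)$, reducing the problem to a uniform asymptotic for the integrand.

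\textit{Step 3 (the minimum emerges).} Let $x^* = \min_{i \geq 2} x_i$ with multiplicity $k$ among $\{x_2,\ldots,x_d\}$, and set $\eta^* = \eta_s(x^*)$. The key observation is that $\ell\inv$ is \emph{rapidly} varying at $0$ (as the inverse of a non-trivial slowly varying function), so $\eta_s(x_i)/\eta^* \to 0$ whenever $x_i > x^*$. Combined with the identity $\phi(1-\eta_s(x_i)) = x_i^{-1}\eta_s(x_i)\ell(s)$, this gives $T_s \sim k\phi(1-\eta^*) = k\eta^*\ell(\eta^*)$, and asymptotic inversion of the index-$1$ regularly varying function $u \mapsto u\ell(u)$ yields $u_s := 1-\phi\inv(T_s) \sim k\eta^*$; slow variation of $\ell$ then gives $\ell(u_s) \sim \ell(\eta^*) = x^{*-1}\ell(s)$. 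Since $\epsilon_s \sim sx_1\ell(s)$ is negligible on the scale of $T_s$, the range of $w_\xi$ has width $O(s)$ while $u_s \gtrsim \eta^* \gg s$, so $w_\xi = u_s(1+o(1))$ uniformly in $\xi \in [T_s, T_s+\epsilon_s]$. Another use of the monotone density theorem gives $|\phi'(1-w)| \sim \ell(w)$, which slow variation upgrades to $|\phi'(1-w_\xi)| \sim \ell(u_s)$ uniformly. Therefore
$$
\phi\inv(T_s) - \phi\inv(T_s+\epsilon_s) \sim \frac{\epsilon_s}{\ell(u_s)} \sim \frac{sx_1\ell(s)}{x^{*-1}\ell(s)} = sx_1 x^*,
$$
and division by $s$ delivers the claimed limit $x_1 \min(x_2,\ldots,x_d)$.

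\textit{Expected obstacle.} The main conceptual hurdle is that the minimum structure is not visible from the symmetric-looking sum $T_s$; it arises only because the rapid variation of $\ell\inv$ collapses $T_s$ onto its dominant terms indexed by $\{i : x_i = x^*\}$. A secondary technical point is to propagate slow variation uniformly through the integral, which rests on the crude comparison $w_\xi - u_s = O(s) \ll u_s \asymp \eta^*$.
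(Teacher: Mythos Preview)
Your argument is correct and follows the paper's route: both write the unconditional probability as $\int_{T_s}^{T_s+\epsilon_s}(-D)\phi\inv(\xi)\,\mathrm{d}\xi$ (the paper via Lemma~\ref{L:volume}) and finish by exploiting that $(-D)\phi\inv$ is slowly varying at zero. The paper's version of Step~3 is shorter---it simply sandwiches the argument of $(-D)\phi\inv$ between $\phi(1-\eta_s(m))$ and $d\,\phi(1-\eta_s(m))$ and applies slow variation directly, bypassing your detour through rapid variation of $\ell\inv$, the multiplicity $k$, and the auxiliary point $u_s$; incidentally, your claim $\eta^*\gg s$ fails in the boundary case $x^*=1$ (then $\eta^*=s$), but the conclusion survives since slow variation only requires $w_\xi/u_s$ to stay bounded.
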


The conclusion of Theorem~\ref{T:upper:AI:com} implies that, conditionally on $U_1 > 1 - s$ with $s \downarrow 0$, every $U_i$ with $i \neq 1$ converges in probability to one but at a slower rate than $s$, that is, for every $0 < \eps < 1$, $1 < \lambda < \infty$, and $i \in \{2, \ldots, d\}$,
\[
    \lim_{s \downarrow 0} \Pr[ 1 - \eps < U_i < 1 - s \lambda \mid U_1 > 1 - s] = 1.
\]
Moreover, in the limit, the vector $(U_2, \ldots, U_d)$ is asymptotically independent from $U_1$ but is itself comonotone. Note that this is completely different from the case $|I| = 1$ in Corollary~\ref{C:upper:AI:I}.

Next, we study the joint survival function of the vector $(U_1, \ldots, U_d)$. A precise asymptotic result on the probability that all $U_i$ are close to unity simultaneously is possible under a certain refinement of the condition that the function $s \mapsto \phi(1-s)$ is regularly varying at zero of index one. We need the following two auxiliary functions defined for $0 < s < 1$:
\begin{align}
\label{E:upper:AI:L}
	L(s) &:= s \frac{\mathrm{d}}{\mathrm{d}s} \{ s^{-1} \phi(1-s) \} = -\phi'(1-s) - s^{-1}\phi(1-s), \\
\label{E:upper:AI:g}
	g(s) &:= \frac{s \, L(s)}{\phi(1-s)} = - \frac{s \phi'(1-s)}{\phi(1-s)} - 1. 
\end{align}

\begin{thm}
\label{T:upper:AI:eta=1}
If $\phi'(1) = 0$ and if the function $L$ in \eqref{E:upper:AI:L} is positive and slowly varying at zero, then the function $g$ in \eqref{E:upper:AI:g} is positive and slowly varying at zero as well, $g(s) \to 0$ as $s \downarrow 0$, and for $(x_1, \ldots, x_d) \in (0, \infty)^d$,
\begin{align}
\label{E:upper:AI:eta=1:r}
	\lefteqn{
    \lim_{s \to 0} \frac{1}{s g(s)} \Pr[U_1 \geq 1 - sx_1, \ldots, U_d \geq 1 - sx_d]
    } \nonumber \\
    &= \sum_{\varnothing \neq I \subset \{1,\ldots,d\}}
        (-1)^{|I|} (\ssum_I x_i) \log(\ssum_I x_i) \nonumber \\
    &= (d-2)! \int_0^{x_1} \cdots \int_0^{x_d} (t_1 + \cdots + t_d)^{-(d-1)}
        \mathrm{d}t_1 \cdots \mathrm{d}t_d.
\end{align}
\end{thm}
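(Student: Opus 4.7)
The plan has three steps. First, I would establish the auxiliary claims about $\ell(s) := s^{-1}\phi(1-s)$ and $g$. A direct differentiation (product rule) shows $\ell'(s) = L(s)/s$, and since $\phi'(1)=0$ forces $\ell(0+)=0$, we have the representation $\ell(s) = \int_0^s L(u)/u\,\mathrm{d}u$. Karamata's theorem applied to $L$ (passing via $u \mapsto 1/u$ to infinity, where $\int^\infty \tilde L(v)/v\,\mathrm{d}v < \infty$ forces $\tilde L(x)/\int_x^\infty \tilde L(v)/v\,\mathrm{d}v \to 0$) then shows that $\ell$ is itself slowly varying at zero and that $L(s)/\ell(s) \to 0$; combined with the positivity of $L$ and $\ell$, this yields all three claimed properties of $g = L/\ell$. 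In particular $-s\phi'(1-s)/\phi(1-s) = 1 + g(s) \to 1$, so \eqref{E:upper:AD:RV} holds with $\theta_1 = 1$.

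Second, for the main limit I would start from the inclusion--exclusion identity
\[
    \Pr[U_1 \geq 1-sx_1, \ldots, U_d \geq 1-sx_d] = \sum_{I \subset \{1,\ldots,d\}} (-1)^{|I|} \phi\inv\Bigl(\ssum_{i \in I}\phi(1-sx_i)\Bigr),
\]
with the convention $\phi\inv(0) = 1$ for $I = \varnothing$. Expanding $\phi(1-sx_i) = sx_i\ell(sx_i)$ and using the uniform convergence theorem for SV functions to write $\ell(sx_i) = \ell(s) + L(s)\log x_i + o(L(s))$ yields $\sum_{i\in I}\phi(1-sx_i) = s\ell(s)S_I + sL(s)T_I + o(sL(s))$, with $S_I = \sum_{i\in I} x_i$ and $T_I = \sum_{i\in I} x_i\log x_i$. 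I would then invert by the perturbation ansatz $u = sS_I + \rho$ in $u\ell(u) = \sum_{i\in I}\phi(1-sx_i)$; a first-order Taylor expansion of $u\ell(u)$ around $u = sS_I$ (using $\ell(sS_I) = \ell(s) + L(s)\log S_I + o(L(s))$) and matching $sL(s)$-coefficients yields $\rho = sg(s)(T_I - S_I\log S_I) + o(sg(s))$, so that
\[
    \phi\inv\Bigl(\ssum_{i\in I}\phi(1-sx_i)\Bigr) = 1 - sS_I + sg(s)(S_I\log S_I - T_I) + o(sg(s)).
\]
Substituting back and invoking the combinatorial vanishings $\sum_I (-1)^{|I|} = 0$, $\sum_I (-1)^{|I|} S_I = 0$ and $\sum_I (-1)^{|I|} T_I = 0$ (all valid for $d \geq 2$, each being of the form $\sum_i c_i \sum_{I\ni i}(-1)^{|I|}$ with the inner sum vanishing since $d-1 \geq 1$), the three leading orders telescope and only $sg(s)\sum_{\varnothing \neq I}(-1)^{|I|} S_I\log S_I + o(sg(s))$ survives, delivering the first expression in \eqref{E:upper:AI:eta=1:r}.

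Third, the equality with the integral representation is a purely algebraic identity: for $f(y) = y\log y$ one has $f^{(d)}(y) = (-1)^d(d-2)!\,y^{-(d-1)}$ for $d \geq 2$, and iterating the fundamental theorem of calculus over $d$ variables gives
\[
    \int_0^{x_1}\!\!\cdots\!\int_0^{x_d} f^{(d)}(t_1+\cdots+t_d)\,\mathrm{d}t_1\cdots\mathrm{d}t_d = \sum_{I \subset \{1,\ldots,d\}} (-1)^{d-|I|} f(S_I),
\]
which after cancelling $(-1)^d$ and discarding $f(0) = 0$ rearranges to the claimed equality. The main obstacle will be the control of the $o(sg(s))$ remainder in the inversion step: one must ensure this remainder is genuinely sharper than $sg(s)$ and not merely $sg(s)$ itself, since it must survive the telescoping of the three leading orders. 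This hinges on a locally uniform refinement of $\ell(sv) - \ell(s) = L(s)\log v + o(L(s))$ over compact $v$-sets away from the origin, delivered by the uniform convergence theorem for SV functions, together with the observation that substituting a perturbed $v = S_I(1 + O(g(s)))$ adds only a $O(L(s)g(s)) = o(L(s))$ correction.
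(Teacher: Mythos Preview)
Your proposal is correct and follows essentially the same route as the paper: both arguments integrate $L(u)/u$ to represent $\ell$, deduce $g\to 0$ and $g$ slowly varying (the paper via Fatou on $1/g(s)=\int_0^1 L(st)/L(s)\,\mathrm{d}t/t$, you via the Karamata-type result BGT~1.5.9b), then invert $\sum_{i\in I}\phi(1-sx_i)=\phi(1-u)$ to second order to obtain $u=sS_I+sg(s)(T_I-S_I\log S_I)+o(sg(s))$, and finish by inclusion--exclusion together with the vanishing of the linear combinatorial sums. The paper phrases the inversion as an implicit definition of $y(\boldsymbol{x},s)$ via the exact identity $f(sx)=f(s)\bigl(x+g(s)\,x\!\int_1^x L(st)/L(s)\,\mathrm{d}t/t\bigr)$ and a boundedness argument, whereas you phrase it as a perturbation expansion controlled by the uniform convergence theorem; these are equivalent, and you have correctly identified the delicate point (uniform control of $\ell(sv)-\ell(s)$ over compact $v$-sets and the bootstrap from $\rho=O(s)$ to $\rho=O(sg(s))$).
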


The case $d=2$ of Theorem~\ref{T:upper:AI:eta=1} provides examples of copulas for which the coefficient of upper tail dependence is equal to zero and at the same time Ledford and Tawn's index of tail dependence, $\eta$, is equal to one \citep{Ledford97}. The case of general $d$ in Theorem~\ref{T:upper:AI:eta=1} provides examples of distributions exhibiting hidden regular variation with a non-trivial hidden angular measure \citep{Maulik04, Resnick02}.

A simple sufficient condition for the function $L$ in \eqref{E:upper:AI:L} to satisfy the requirements in Theorem~\ref{T:upper:AI:eta=1} is that the function $s \mapsto \phi(1-s)$ is twice continuously differentiable and that its second derivative is positive and regularly varying at zero of index $-1$. Under the conditions of Theorem~\ref{T:upper:AI:eta=1}, it follows from \eqref{E:upper:AI:aux:20} in the proof that for all $0<x<\infty$,
\[
	\frac{\phi(1-sx)}{\phi(1-s)} = x + g(s) x \log x + o\{g(s)\}, \qquad s \downarrow 0,
\]
that is, the function $s \mapsto \phi(1-s)$ is second-order regularly varying at zero with index one and auxiliary function $g$.

\begin{cor}
\label{C:upper:AI:eta=1}
Under the assumptions of Theorem~\ref{T:upper:AI:eta=1}, if $I \subset \{1, \ldots, d\}$ and $|I| \geq 2$, then for every $(x_i)_{i \in I} \in (0, \infty)^{|I|}$ and every $(y_1, \ldots, y_d) \in (0, \infty)^d$,
\[
    \lim_{s \downarrow 0} 
	\Pr[ \forall i = 1, \ldots, d : U_i \geq 1 - sx_i y_i
	\mid \forall i \in I : U_i \geq 1 - s x_i ]
    = \frac{r_d(z_1,\ldots,z_d)}{r_{|I|}((x_i)_{i \in I})}
\]
where $z_i = x_i \wedge y_i$ for $i \in I$ and $z_i = y_i$ for $i \in I^c$ and
\begin{align*}
    r_k(u_1,\ldots,u_k) 
    &:= \sum_{\varnothing \neq J \subset \{1,\ldots,k\}}
        (-1)^{|J|} (\ssum_J u_j) \log(\ssum_J u_j) \\
    &= (k-2)! \int_0^{u_1} \cdots \int_0^{u_k} (t_1 + \cdots + t_k)^{-(k-1)}
        \mathrm{d}t_1 \cdots \mathrm{d}t_k
\end{align*}
for integer $k \geq 2$ and $(u_1, \ldots, u_k) \in (0, \infty)^k$.
\end{cor}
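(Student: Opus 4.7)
The plan is to write the conditional probability as the ratio of two unconditional joint survival probabilities and apply Theorem~\ref{T:upper:AI:eta=1} to each. Combining the conditioning event with the event of interest coordinatewise (so that for $i \in I$, $U_i \geq 1 - sx_i$ together with $U_i \geq 1 - s y_i$ gives $U_i \geq 1 - s(x_i \wedge y_i)$), the numerator becomes $\Pr[\forall i = 1, \ldots, d : U_i \geq 1 - s z_i]$, while the denominator is $\Pr[\forall i \in I : U_i \geq 1 - s x_i]$.

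To apply Theorem~\ref{T:upper:AI:eta=1} to the denominator, I invoke the standard fact that any lower-dimensional marginal of an Archimedean copula with generator $\phi$ is itself Archimedean with the same generator: this is immediate from \eqref{E:C} by setting $u_j = 1$, so that $\phi(u_j) = 0$, for $j \not\in I$. Crucially, the auxiliary function $g$ from \eqref{E:upper:AI:g} and the hypotheses of Theorem~\ref{T:upper:AI:eta=1} are properties of $\phi$ alone, independent of the dimension. Hence the theorem applies to the sub-vector $(U_i)_{i \in I}$ with the \emph{same} function $g$ that governs the full $d$-variate vector.

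Two applications of Theorem~\ref{T:upper:AI:eta=1} then yield, as $s \downarrow 0$,
\[
    \Pr[\forall i : U_i \geq 1 - s z_i] \sim s\,g(s)\,r_d(z_1, \ldots, z_d),
\]
\[
    \Pr[\forall i \in I : U_i \geq 1 - s x_i] \sim s\,g(s)\,r_{|I|}((x_i)_{i \in I}).
\]
Since $r_{|I|}((x_i)_{i \in I}) > 0$ for strictly positive arguments---immediate from the integral representation in \eqref{E:upper:AI:eta=1:r}, where the integrand is everywhere positive---the ratio is well-defined, the factor $s\,g(s)$ cancels, and the stated limit is read off directly. The equality of the two forms of $r_k$ was already established inside Theorem~\ref{T:upper:AI:eta=1} and requires no separate argument. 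There is no genuinely hard step here: the whole argument is a factorization of the conditional probability into two instances of Theorem~\ref{T:upper:AI:eta=1}, the only delicate point being the applicability of that theorem to sub-vectors with the same auxiliary function $g$, which the marginal remark above settles.
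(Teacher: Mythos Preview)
Your argument is correct and is precisely the intended derivation: the paper states the result as an immediate corollary of Theorem~\ref{T:upper:AI:eta=1} without a separate proof, and your write-up spells out exactly the two points that make it go through---namely that the $|I|$-margin is again Archimedean with the same generator $\phi$, so Theorem~\ref{T:upper:AI:eta=1} applies in dimension $|I|$ with the \emph{same} auxiliary function $g$, and that the integral form of $r_k$ guarantees positivity of the denominator. Nothing is missing.
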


The limit distribution in Corollary~\ref{C:upper:AI:eta=1} is quite different from the one in Theorem~\ref{T:upper:AI:com}. Even in the simple case $d = 2$, $I = \{1, 2\}$ and $x_1 = x_2 = 1$, we have not been able to identify this distribution or compute its (survival) copula. 

\section{Lower tail: Proofs}
\label{S:lower:proofs}

We present the proofs of the theorems in Section~\ref{S:lower}.

\subsection{Proof of Theorem~\ref{T:lower:AD}}

By Lemma~\ref{L:RV} below, equation~\eqref{E:lower:AD:RV} is equivalent to regular variation of $\phi$ at zero of index $- \theta_0$.

The distribution function of $(U_i)_{i \in I}$ is given by the $|I|$-variate copula with generator $\phi$. Hence, it suffices to show \eqref{E:lower:AD} for the case $I = \{1, \ldots, d\}$.

First, suppose $\theta_0 = 0$. Fix $0 < \eps < 1$. Since $\phi$ is slowly varying, $\phi(s\eps) \leq 2\phi(s)$ and thus $s\eps \geq \phi\inv(2\phi(s))$ for all sufficiently small $s > 0$. Hence $\phi\inv\{2\phi(s)\} = o(s)$ as $s \downarrow 0$. Denoting $x = x_1 \vee \cdots \vee x_d$, we arrive at $C(sx_1,\ldots,sx_d) \leq \phi\inv(d\phi(sx)) \leq \phi\inv(2\phi(sx)) = o(s)$ as $s \downarrow 0$.

Second, suppose $0 < \theta_0 < \infty$. The function $t \mapsto f(t) = \phi(1/t)$ is increasing and regularly varying at infinity of index $\theta_0$. By \citet[Theorem~1.5.12]{BGT}, its inverse, $f\inv$ is regularly varying at infinity of index $1/\theta_0$. Since $\phi\inv = 1/f\inv$, we find that $\phi\inv$ is regularly varying at infinity of index $-1/\theta_0$. Now write
\[
    s^{-1} C(sx_1,\ldots,sx_d)
    =	\frac{1}{\phi\inv(\phi(s))}
        \phi\inv
            \biggl( \phi(s)
                \biggl\{
                    \frac{\phi(sx_1)}{\phi(s)} + \cdots + \frac{\phi(sx_d)}{\phi(s)}
                \biggr\}
            \biggr).
\]
Since $\phi(s) \to \infty$ as $s \downarrow 0$ and by the uniform convergence theorem for regularly varying functions \citep[Theorem~1.5.2]{BGT}, the right-hand side of the previous display converges to $(x_1^{-\theta_0} + \cdots + x_d^{-\theta_0})^{-1/\theta_0}$ as $s \downarrow 0$.

Finally, suppose $\theta_0 = \infty$. Denote $m = x_1 \wedge \cdots \wedge x_d$. We have
\[
    s^{-1} \phi\inv(d\phi(sm)) \leq s^{-1} C(sx_1, \ldots, sx_d) \leq m.
\]
Fix $0 < \lambda < 1$. Since $\phi$ is regularly varying at zero with index $-\infty$, we have $\phi(\lambda sm) \geq d \phi(sm)$ and thus $\lambda m \leq s^{-1} \phi\inv(d\phi(sm))$ for all sufficiently small $s > 0$. Let $\lambda$ increase to one to see that $\lim_{s \downarrow 0} s^{-1} C(sx_1, \ldots, sx_d) = m$. This finishes the proof of Theorem~\ref{T:lower:AD}. \halmos

\subsection{Proofs of Theorems~\ref{T:lower:AI:1} and \ref{T:lower:AI:2}}

Theorems~\ref{T:lower:AI:1} and \ref{T:lower:AI:2} are both special cases of the following one.

\begin{thm}
\label{T:lower:AI}
Under the conditions of Theorem~\ref{T:lower:AI:1}, necessarily $\kappa \leq 1$, and for every $\varnothing \neq I \subset \{1, \ldots, d\}$ and every $(x_1, \dots, x_d) \in (0,\infty)^d$,
\begin{eqnarray}
\label{E:lower:AI}
    \lefteqn{
    \lim_{s \downarrow 0} \frac{1}{\phi\inv(|I|\phi(s))} 
    \Pr[ \forall i \in I : U_i \leq s x_i ; \forall i \in I^c : U_i \leq \chi_s(x_i) ]
    } \\
    &= \prod_{i \in I} x_i^{|I|^{-\kappa}}
    \prod_{i \in I^c} \exp (-|I|^{-\kappa} x_i^{-1}), \nonumber
\end{eqnarray}
with $\chi_s(\,\cdot\,)$ as in Theorem~\ref{T:lower:AI:1}.
\end{thm}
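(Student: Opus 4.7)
The plan is to exploit the identity
\[
    \Pr[\forall i \in I : U_i \leq s x_i ; \forall i \in I^c : U_i \leq \chi_s(x_i)]
    = \phi\inv\!\left( \ssum_{i \in I} \phi(sx_i) + \psi(\phi(s)) \ssum_{i \in I^c} x_i^{-1} \right),
\]
which follows from the Archimedean representation of $C$ together with $\phi(\chi_s(x_i)) = x_i^{-1}\psi(\phi(s))$ (by definition of $\chi_s$). Setting $y = \phi(s)\to\infty$ as $s\downarrow 0$ and $k = |I|$, the proof reduces to an asymptotic analysis of $\phi\inv$ at perturbations of $ky$ of size $O(\psi(y))$. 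The whole argument hinges on a single integral identity: since $\psi = -1/D(\log\phi\inv)$, for $0<a<b<\infty$ one has
\[
    \log \frac{\phi\inv(a)}{\phi\inv(b)} = \int_a^b \frac{du}{\psi(u)}.
\]

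First I would prove $\kappa\leq 1$. Taking $a=y_0$ fixed and letting $b=y\to\infty$, the left-hand side tends to $+\infty$ because $\phi\inv(y)\to 0$; but if $\kappa>1$, then $1/\psi$ is regularly varying of index $-\kappa<-1$ and hence integrable at infinity, a contradiction. A small complement is needed, namely $\psi(y)/y \to 0$: for $\kappa<1$ this is immediate from regular variation, while for $\kappa=1$ it follows from the Karamata representation of the slowly varying factor in $\psi$ combined with the divergence of $\int^\infty du/\psi(u)$ (the case $\psi(y)/y\to c>0$ forces $\phi\inv(y)\asymp y^{-1/c}$, contradicting that $\phi$ is slowly varying at $0$ via $\theta_0=0$).

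Next I would establish the key expansion
\[
    \phi(sx) = y - \psi(y)\log x + o(\psi(y)),\qquad s\downarrow 0,
\]
uniformly for $x$ in any compact subset of $(0,\infty)$. Applying the integral identity with $a=\phi(sx)\wedge\phi(s)$ and $b=\phi(sx)\vee\phi(s)$ gives $|\log x| = \int_a^b du/\psi(u)$. Because $\psi(y)/y\to 0$, the interval $[a,b]$ has length $o(y)$ and by Potter's bounds $\psi(u) = \psi(y)(1+o(1))$ uniformly on it, yielding $|b-a| = (|\log x| + o(1))\psi(y)$; matching signs gives the displayed expansion. Summing over $i\in I$,
\[
    \ssum_{i\in I}\phi(sx_i) + \psi(y)\ssum_{i\in I^c} x_i^{-1}
    = ky + \psi(y)\,c + o(\psi(y)), \quad
    c := \ssum_{i\in I^c} x_i^{-1} - \ssum_{i\in I}\log x_i.
\]

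Finally I would compare $\phi\inv$ at this perturbed argument to $\phi\inv(ky)$ by the same identity:
\[
    \log\frac{\phi\inv(ky + \psi(y)c + o(\psi(y)))}{\phi\inv(ky)}
    = -\int_{ky}^{ky+\psi(y)c + o(\psi(y))}\frac{du}{\psi(u)}.
\]
Since $\psi(y)/(ky)\to 0$, the substitution $u = ky(1+v)$ and the uniform convergence theorem (using $\psi(u)/\psi(ky)\to(u/(ky))^\kappa$ on compacts) show that the integral is asymptotic to $c\,\psi(y)/\psi(ky)\to c\,k^{-\kappa}$. Exponentiating and splitting $c$ into the $I$- and $I^c$-parts produces exactly the product $\prod_{i\in I}x_i^{k^{-\kappa}}\prod_{i\in I^c}\exp(-k^{-\kappa}x_i^{-1})$. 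The particular case $I^c=\varnothing$ gives Theorem~\ref{T:lower:AI:1}; combining with the case $y_i \leq x_i$ for $i\in I$ gives Theorem~\ref{T:lower:AI:2}. The main obstacle is the uniformity in step two: controlling the $o(\psi(y))$ error on an interval whose length is itself of order $\psi(y)$ requires Potter bounds rather than pointwise regular variation, and the borderline case $\kappa=1$ demands separate care because then $\psi(y)/y$ decays only through a slowly varying factor.
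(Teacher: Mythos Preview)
Your approach is essentially the same as the paper's: both arguments rest on the integral identity $\log\{\phi\inv(a)/\phi\inv(b)\}=\int_a^b \mathrm{d}u/\psi(u)$, establish the expansion $\phi(sx)-\phi(s)=-\psi(\phi(s))\log x+o(\psi(\phi(s)))$ (the paper phrases this as membership of $\phi$ in the de~Haan class $\Pi$), and then compute the ratio $\phi\inv(ky+c\psi(y)+o(\psi(y)))/\phi\inv(ky)$ via the same integral (the paper phrases this as $\psi$ being self-neglecting and $1/\phi\inv$ lying in the class $\Gamma$, citing \cite{BGT}).

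One point where the paper is cleaner: for $\psi(y)/y\to 0$ you argue by cases on $\kappa$ and, when $\kappa=1$, sketch a contradiction that only excludes the situation $\psi(y)/y\to c>0$, not oscillatory behaviour. The paper bypasses this entirely by noting the chain-rule identity $\psi(\phi(s))=-s\phi'(s)$, whence $\psi(\phi(s))/\phi(s)=-s\phi'(s)/\phi(s)\to\theta_0=0$ directly from the hypothesis; since $\phi(s)\to\infty$ this gives $\psi(t)/t\to 0$ for all $\kappa$ at once, and then $\kappa\leq 1$ follows immediately. Adopting this one-line argument would close the small gap in your treatment of the borderline case.
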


\begin{proof}
By the chain rule, $\psi(t) = - \phi\inv(t) \phi'(\phi\inv(t))$ and thus $\psi(\phi(s)) = - s \phi'(s)$. Equation~\eqref{E:lower:AD:RV} with $\theta_0 = 0$ therefore implies $\lim_{s \downarrow 0} \psi(\phi(s)) / \phi(s) = 0$ and thus $\lim_{t \to \infty} \psi(t) / t = 0$, whence $\kappa \leq 1$. Moreover, since $\psi \circ \phi$ is slowly varying at zero, the function $-\phi'(s) = s^{-1} \psi(\phi(s))$ is regularly varying at zero of index $-1$.

For $x > 0$, since $\psi(\phi(s)) = - s \phi'(s)$,
\[
	\frac{\phi(sx) - \phi(s)}{\psi(\phi(s))}
	= \frac{1}{\psi(\phi(s))} \int_s^{sx} \phi'(u) \du
	= - \int_1^x \frac{\phi'(vs)}{\phi'(s)} \dv.
\]
Since $-\phi'$ is regularly varying at zero of index $-1$, if $x(s) \to x > 0$ as $s \downarrow 0$, then 
\begin{equation}
\label{E:Pi}
	\lim_{s \downarrow 0}
	\frac{\phi(sx(s)) - \phi(s)}{\psi(\phi(s))}
	= - \log(x)
\end{equation}
by the uniform convergence theorem \citep[Theorem~1.5.2]{BGT}. Equation~\eqref{E:Pi} states that $\phi$ is in the de Haan class $\Pi$ with auxiliary function $\psi \circ \phi$ \citep[section~3.7]{BGT}

Since $\psi(t) = o(t)$ as $t \to \infty$ and since $\psi$ is regularly varying at infinity of finite index, by the uniform convergence theorem \citep[Theorem~1.5.2]{BGT},
\begin{equation}
\label{E:selfneglecting}
	\lim_{t \to \infty} \frac{\psi(t + v \psi(t))}{\psi(t)} = 1
\end{equation}
locally uniformly in $v \in \RR$. Equation~\eqref{E:selfneglecting} states that $\psi$ is self-neglecting \citep[section~2.11]{BGT}.

The function $\phi\inv$ can be expressed in terms of $\psi$: as $\log \phi\inv$ is absolutely continuous and $\log \phi\inv(0) = 0$,
\[
	\phi\inv(t) = \exp \biggl( - \int_0^t \frac{\du}{\psi(u)} \biggr),
\]
for $t \geq 0$. Hence, for real $y$, 
\[
	\frac{\phi\inv(t + y \psi(t))}{\phi\inv(t)}
	= \exp \biggl( - \int_t^{t + y \psi(t)} \frac{\du}{\psi(u)} \biggr) 
	= \exp \biggl( - \int_0^y \frac{\psi(t)}{\psi(t + v \psi(t))} \dv \biggr).
\]
Therefore, if $y(t) \to y \in \RR$ as $t \to \infty$, by \eqref{E:selfneglecting},
\begin{equation}
\label{E:Gamma}
	\lim_{t \to \infty} 
	\frac{\phi\inv(t + y(t)\psi(t))}{\phi\inv(t)}
	= \exp(-y).
\end{equation}
Equation~\eqref{E:Gamma} states that the function $1/\phi$ belongs to the class $\Gamma$ with auxiliary function $\psi$ \citep[section~3.10]{BGT}.

Property~\emph{(i)} of $\chi_s$ stated in Theorem~\ref{T:lower:AI:1} follows from the fact that $\phi\inv$ is a decreasing homeomorphism from $[0, \infty]$ to $[0, 1]$ and $\psi(\phi(s)) = - s \phi'(s) > 0$ for $0 < s < 1$. For property~\emph{(ii)}, take $x > 0$ and $\eps > 0$. Since $\lim_{s \downarrow 0} \phi(s/\eps) / \phi(s) = 1$ and $\lim_{t \to \infty} \psi(t) / t = 0$, there exists $0 < s_0 < 1$ such that $\psi(\phi(s)) \leq x \phi(s/\eps)$ for all $s \in (0, s_0]$. For such $s$, also $\eps \chi_s(x) \geq s$. Since $\eps$ was arbitrary, property~\emph{(ii)} follows. 

Finally, for $I$ and $\xb = (x_1, \ldots, x_d)$ as in the statement of the theorem,
\begin{align}
\label{E:lower:AI:10}
	\lefteqn{
    \Pr[ \forall i \in I : U_i \leq s x_i ; \forall i \in I^c : U_i \leq \chi_s(x_i) ]
    } \nonumber \\
    &=  \phi\inv
        \biggl(
            \sum_{i \in I} \phi(sx_i) + \sum_{i \in I^c} x_i^{-1} \psi(\phi(s))
        \biggr) \nonumber \\
    &= \phi\inv \bigl( |I| \phi(s) + y(\xb; s) \psi(|I|\phi(s)) \bigr)
\end{align}
with
\[
    y(\xb;s) =
    \biggl(
        \sum_{i \in I} \frac{\phi(sx_i) - \phi(s)}{\psi(\phi(s))}
        + \sum_{i \in I^c} x_i^{-1}
    \biggr)
    \frac{\psi(\phi(s))}{\psi(|I|\phi(s))}.
\]
By~\eqref{E:Pi}, since $\phi(0) = \infty$ and since $\psi$ is regularly varying at infinity of index $\kappa$,
\begin{equation}
\label{E:lower:AI:20}
    \lim_{s \downarrow 0} y(\xb;s) =
    \biggl( - \sum_{i \in I} \log(x_i) + \sum_{i \in I^c} x_i^{-1} \biggr) |I|^{-\kappa}.
\end{equation}
Combine equations \eqref{E:Gamma}, \eqref{E:lower:AI:10} and \eqref{E:lower:AI:20} to arrive at~\eqref{E:lower:AI}.
\end{proof}

\section{Upper tail: Proofs}
\label{S:upper:proofs}

\subsection{Proof of Theorem~\ref{T:upper:AD}}

By the (inverse) inclusion-exclusion formula, i.e.\
\[
	\Pr[ \sbigcap_{i \in I} A_i ] 
	= \sum_{\varnothing \neq J \subset I} (-1)^{|J| - 1} \Pr[ \sbigcup_{j \in J} A_j ]
\]
valid for finite $I$ and for arbitrary events $A_i$, eq.\ \eqref{E:upper:AD} follows from the following one, where the intersection ($\forall$) of the events $U_i \geq 1 - s x_i$ has been replaced by a union ($\exists$):
\begin{equation}
\label{E:upper:AD:l}
    \lim_{s \to 0} s^{-1} \Pr[ \exists i \in I : U_i \geq 1 - s x_i ]
    =	\left\lbrace
        \begin{array}{l@{\quad}l}
            (\sum_{i \in I} x_i^{\theta_1})^{1/\theta_1}
            & \mbox{if $1 \leq \theta_1 < \infty$,} \\
            \bigvee_{i \in I} x_i
            & \mbox{if $\theta_1=\infty$,}
        \end{array}
	    \right.
\end{equation}
For the case $\theta_1 = \infty$, note that indeed $\bigwedge_{i \in I} x_i = \sum_{\varnothing \neq J \subset I} (-1)^{|J| - 1} \Pr[ \bigvee_{j \in J} x_j]$. Further, since the copula of the vector $(U_i)_{i \in I}$ is the $|I|$-variate Archimedean copula with generator $\phi$, we can without loss of generality assume that $I = \{1, \ldots, d\}$.

By Lemma~\ref{L:RV}, eq.\ \eqref{E:upper:AD:RV} is equivalent to regular variation at zero of the function $s \mapsto \phi(1-s)$ with index $\theta_1$.

First, consider the case $1 \leq \theta_1 < \infty$. We have
\begin{align*}
    \lefteqn{
    s^{-1} \Pr( \sbigcup_{i=1}^d \{ U_i \geq 1 - s x_i \} )
    } \\
    &= s^{-1} \{ 1 - \phi\inv (\phi(1 - s x_1) + \cdots + \phi(1 - s x_d)) \} \\
    &= \frac{1}{1 - \phi\inv(\phi(1-s))} \\
    & \qquad \mbox{} \times
        \biggl[ 1 - \phi\inv
            \biggl( \phi(1-s)
            \biggl\{ 
            	\frac{\phi(1-sx_1)}{\phi(1-s)} + \cdots
                + \frac{\phi(1-sx_d)}{\phi(1-s)}
            \biggr\}
            \biggr)
        \biggr].
\end{align*}
The function $x \mapsto 1/\phi(1-1/x)$ is regularly varying at infinity with index $\theta_1$. Therefore, its inverse function, the function $t \mapsto 1/\{1-\phi\inv(1/t)\}$ is regularly varying at infinity with index $1/\theta_1$ \citep[Theorem~1.5.12]{BGT}, and thus the function $1-\phi\inv$ is regularly varying at zero with index $1/\theta_1$. For each $i$, we have $\phi(1-sx_i)/\phi(1-s) \to x_i^{\theta_1}$ as $s \downarrow 0$. By the uniform convergence theorem \citep[Theorem~1.5.2]{BGT}, the right-hand side of the previous display then converges to $(\sum_{i=1}^d x_i^{\theta_1})^{1/\theta_1}$, as required.

Second, consider the case $\theta_1 = \infty$. Pick $1 < \lambda < \infty$. Since $s \mapsto \phi(1 - s)$ is regularly varying at zero of index $\infty$, we have $\lim_{s \downarrow 0} \phi(1 - \lambda s) / \phi(1 - s) = \infty$ and thus
\begin{align*}
    \phi(1 - s(x_1 \vee \cdots \vee x_d))
    &\leq \phi(1-sx_1) + \cdots + \phi(1-sx_d) \\
    &\leq d \phi(1 - s(x_1 \vee \cdots \vee x_d)) \\
    &\leq \phi(1 - \lambda s (x_1 \vee \cdots \vee x_d))
\end{align*}
for all $s$ in a right-neighbourhood of zero. Apply the function $1-\phi\inv$ to the various parts of this inequality, multiply by $s^{-1}$ and let $s$ decrease to zero to find
\begin{align*}
    x_1 \vee \cdots \vee x_d
    &\leq \liminf_{s \downarrow 0} 
	    s^{-1} \Pr( \sbigcup_{i=1}^d \{ U_i \geq 1 - sx_i \} ) \\
    &\leq \limsup_{s \downarrow 0} 
    	s^{-1} \Pr( \sbigcup_{i=1}^d \{ U_i \geq 1 - sx_i \} ) 
    \leq \lambda (x_1 \vee \cdots \vee x_d).
\end{align*}
Let $\lambda$ decrease to finish the proof in case $\theta_1 = \infty$. \halmos

\subsection{Proof of Theorem~\ref{T:upper:AI:I}}

If $(-D)^k \phi\inv(0) < \infty$ for some integer $k \geq 1$, then $(-D)^i \phi\inv(0) < \infty$ for all $i = 1, \ldots, k$. Since $D \phi\inv(0) = 1 / \phi'(1)$, necessarily $\phi'(1) < 0$.

By Lemma~\ref{L:volume},
\begin{align*}
	\lefteqn{
	\Pr[ \sbigcap_{i \in I} \{ U_i \geq 1 - sx_i \} 
    \cap \sbigcap_{i \in I^c} \{ U_i \leq y_i \} ]
    } \\
    &= \int_{\prod_I [0, \phi(1-sx_i)]} (-D)^{|I|}
    \phi\inv \bigl( \ssum_{i \in I^c} \phi(y_i) + \ssum_{i \in i} t_i \bigr)
    \mathrm{d}(t_i)_{i \in I}.
\end{align*}
Change variables $t_i = s u_i$ to find
\begin{align*}
	\lefteqn{
	s^{-|I|} \Pr[ \sbigcap_{i \in I} \{ U_i \geq 1 - sx_i \} 
    \cap \sbigcap_{i \in I^c} \{ U_i \leq x_i \} ]
    } \\
    &= \int_{\prod_I [0, s^{-1} \phi(1 - sx_i)]} (-D)^{|I|}
    \phi\inv \bigl( \ssum_{i \in I^c} \phi(y_i) + s \ssum_{i \in I} u_i \bigr)
    \mathrm{d}(u_i)_{i \in I}.
\end{align*}
Since $(-D)^{|I|} \phi\inv$ is continuous and $s^{-1} \phi(1 - sx_i) \to x_i |\phi'(1)|$ as $s \downarrow 0$, the stated limit now follows from the dominated convergence theorem.

\subsection{Proof of Theorem~\ref{T:upper:AI:com}}

Because $\phi$ is positive, convex, and $\phi(s) = o(s)$ as $s \downarrow 0$, the function $\ell$ is positive, increasing, and vanishes at zero. Moreover, since $s \mapsto \phi(1-s)$ is regularly varying at zero of index one, $\ell$ is slowly varying at zero.

Write $\eta_s(x) = \ell\inv(x^{-1}\ell(s))$. Property (i) is clear from the fact that both $\ell$ and $\ell\inv$ vanish at zero. If $0 < x < 1$, then as $\ell$ is slowly varying, $\ell(s) \geq x \ell(s/\eps)$ and thus $\eps \eta_s(x) \geq s$ for all $\eps > 0$ and all $s$ sufficiently close to zero; property (ii) follows.

By Lemma~\ref{L:volume},
\begin{align}
\label{E:upper:AI:com:5}
    \lefteqn{
    \Pr[\{ U_1 > 1 - sx_1 \} \cap \sbigcap_{j=2}^d \{ U_j \leq 1 - \eta_s(x_j) \} ]
    } \nonumber \\
    &= \int_0^{\phi(1 - sx_1)} 
    (-D)\phi\inv \bigl( \ssum_{j=2}^d \phi(1 - \eta_s(x_j)) + y \bigr) 
    \mathrm{d}y.
\end{align}
Let $m = x_2 \wedge \cdots \wedge x_d$. Since $\phi$ is decreasing and by (ii), for all sufficiently small $s$,
\[
	\phi(1 - \eta_s(m)) 
	\leq \sum_{j=2}^d \phi(1 - \eta_s(x_j)) + \phi(1 - sx_1)
	\leq d \phi(1 - \eta_s(m)).
\]
As $(-D)\phi\inv$ is nonincreasing (for $\phi\inv$ is convex), by \eqref{E:upper:AI:com:5},
\begin{align}
\label{E:upper:AI:com:10}
	\lefteqn{
	\phi(1 - sx_1) \cdot (-D)\phi\inv (d \phi (1 - \eta_s(m)))
	} \nonumber \\
	&\leq \Pr[\{ U_1 > 1 - sx_1 \} \cap \sbigcap_{j=2}^d \{ U_j \leq 1 - \eta_s(x_j) \}] 
	\nonumber \\
	&\leq \phi(1 - sx_1) \cdot (-D)\phi\inv (\phi (1 - \eta_s(m))).
\end{align}
Since $s \mapsto \phi(1-s)$ is regularly varying at zero of index $1$, its inverse, $1 - \phi\inv$, must be regularly varying at zero of index $1$ as well. Moreover, by \eqref{E:upper:AD:RV} with $\theta_1 = 1$,
\begin{equation}
\label{E:upper:AI:com:20}
	(-D) \phi\inv (t) = - \frac{1}{\phi'(\phi\inv(t))}
	\sim \frac{1 - \phi\inv(t)}{t}, \qquad t \downarrow 0.
\end{equation}
As a consequence, the function $(-D) \phi\inv$ is slowly varying at zero. The upper and lower bounds in \eqref{E:upper:AI:com:10} are therefore asymptotically equivalent to each other, whence
\begin{align*}
	\lefteqn{
	\Pr[\{ U_1 > 1 - sx_1 \} \cap \sbigcap_{j=2}^d \{ U_j \leq 1 - \eta_s(x_j) \}
	\mid U_1 > 1-s ]
	} \\
	&= s^{-1} 
	\Pr[\{ U_1 > 1 - sx_1 \} \cap \sbigcap_{j=2}^d \{ U_j \leq 1 - \eta_s(x_j) \} ] \\
	&\sim s^{-1} \phi(1 - sx_1) \cdot (-D) \phi\inv (\phi (1 - \eta_s(m))),
	\qquad s \downarrow 0.
\end{align*}
By \eqref{E:upper:AI:com:20}, the last expression is asymptotically equivalent to
\[
	s^{-1} \phi(1 - sx_1) \cdot \frac{\eta_s(m)}{\phi(1 - \eta_s(m))}
	= x_1 \ell(sx_1) \frac{1}{\ell(\eta_s(m))}
	= x_1 m \frac{\ell(sx_1)}{\ell(s)}.
\]
Since $\ell$ is slowly varying at zero, the proof is complete. \halmos

\subsection{Proof of Theorem~\ref{T:upper:AI:eta=1}}

Denote $f(s) = \phi(1-s)$ for $0 \leq s < 1$. Observe that
\[
	L(s) = f'(s) - \frac{f(s)}{s} = s \frac{\mathrm{d}}{\mathrm{d}s} \frac{f(s)}{s}.
\]
Since $s^{-1}f(s) \to 0$ as $s \downarrow 0$,
\[
    f(s) = s \int_0^s L(t) \frac{\mathrm{d}t}{t}, \qquad 0 \leq s < 1.
\]
Note that the function $g$ can be written as
\begin{align}
\label{E:upper:AI:aux:10}
    g(s) 
    &= \frac{sf'(s)}{f(s)} - 1 = \frac{s L(s)}{f(s)} \\
    &= L(s) \bigg/ \int_0^s L(t) \frac{\mathrm{d}t}{t}
    = 1 \bigg/ \int_0^1 \frac{L(st)}{L(s)} \frac{\mathrm{d}t}{t}. \nonumber
\end{align}
By Fatou's lemma, since $L$ is slowly varying at zero, $g(s) \to 0$ as $s \downarrow 0$. Hence, equation~\eqref{E:upper:AD:RV} holds with $\theta_1 = 1$. As a consequence, $f$ is regularly varying at zero of index one, which in turn by \eqref{E:upper:AI:aux:10} implies that $g$ is slowly varying at zero. Moreover, for every $0 < x < \infty$ and every sufficiently small, positive $s$,
\begin{align}
    f(sx)
    &= sx \int_0^{sx} L(t) \frac{\mathrm{d}t}{t} \nonumber \\
    &= xf(s) + sx \int_s^{sx} L(t) \frac{\mathrm{d}t}{t} \nonumber \\
    &= xf(s) + s L(s) \cdot x \int_1^x \frac{L(st)}{L(s)} \frac{\mathrm{d}t}{t} \nonumber \\
    &= f(s) \biggl( x + g(s) \cdot x \int_1^x \frac{L(st)}{L(s)} \frac{\mathrm{d}t}{t} \biggr). 
\label{E:upper:AI:aux:20}
\end{align}

Fix $\xb \in (0, \infty)^d$. For sufficiently small, positive $s$, define $y(\xb,s)$ by
\[
    f(sx_1) + \cdots + f(sx_d)
    = f\{s(x_1 + \cdots + x_d) + sg(s) y(\xb,s)\}.
\]
Since $f(0) = 0$ and $f$ is increasing and convex, $y(\xb,s)$ is well defined and nonpositive. By \eqref{E:upper:AI:aux:20}, on the one hand
\[
    f(sx_1) + \cdots + f(sx_d)
    = f(s) \biggl( \sum_{i=1}^d x_i
    + g(s) \cdot \sum_{i=1}^d x_i \int_1^{x_i} \frac{L(st)}{L(s)}
                                                \frac{\mathrm{d}t}{t} \biggr),
\]
and on the other hand
\begin{align*}
    \lefteqn{
    f\{s(x_1 + \cdots + x_d) + sg(s) y(\xb,s)\}
    } \\
    &= f\{s a(\xb,s)\}
    = f(s) \biggl( a(\xb,s) + g(s) a(\xb,s) \int_1^{a(\xb,s)} \frac{L(st)}{L(s)}
                                                \frac{\mathrm{d}t}{t} \biggr)
\end{align*}
where
\[
    a(\xb,s) = \sum_{i=1}^d x_i + g(s) y(\xb,s).
\]
From the last four displayed equations it follows that
\[
    \sum_{i=1}^d x_i \int_1^{x_i} \frac{L(st)}{L(s)} \frac{\mathrm{d}t}{t}
    = y(\xb,s) + a(\xb,s) \int_1^{a(\xb,s)} \frac{L(st)}{L(s)}
                                                \frac{\mathrm{d}t}{t}.
\]
The left-hand side of this equation converges to $\sum_1^d x_i \log(x_i)$ by the uniform convergence theorem \citep[Theorem~1.2.1]{BGT}. Since $0 < a(\xb,s) \leq \sum_1^d x_i$, the second term on the right-hand side of the previous equation remains bounded from above as $s \downarrow 0$. Therefore, $y(\xb,s)$ must remain bounded from below as $s \downarrow 0$. Since also $y(\xb,s) \leq 0$, necessarily $y(\xb,s) = O(1)$ as $s \downarrow 0$. But since $g(s) \to 0$ as $s \downarrow 0$, 
\[
    \lim_{s \downarrow 0} a(\xb,s) = \sum_{i=1}^d x_i.
\]
Denote $k(x) = x \log(x)$. Combine the two previous displays to conclude that
\[
    y(\xb) := \lim_{s \downarrow 0} y(\xb,s)
    = \sum_{i=1}^d k(x_i) - k\bigl( \ssum_{i=1}^d x_i \bigr).
\]

Next, by definition of $f$ and $y(\xb, s)$,
\begin{align*}
    \Pr[\sbigcup_{i=1}^d \{ U_i \geq 1 - sx_i \}]
    &= 1 - \phi\inv\{\phi(1-sx_1) + \cdots + \psi(1-sx_d)\} \\
    &= f\inv\{f(sx_1) + \cdots + f(sx_d)\} \\
    &= s(x_1 + \cdots + x_d) + s g(s) y(\xb,s) \\
    &= s(x_1 + \cdots + x_d) + s g(s) y(\xb) + o\{s g(s)\},
    \qquad s \downarrow 0.
\end{align*}
Combine this formula with the inverse inclusion-exclusion formula to get
\begin{align*}
    \lefteqn{
    \Pr[\sbigcap_{i=1}^d \{ U_i \geq 1 - sx_i \}]
    } \\
    &= \sum_{\varnothing \neq I \subset \{1,\ldots,d\}}
        (-1)^{|I|-1} \Pr[\sbigcup_{i \in I} \{ U_i \geq 1 - sx_i \}] \\
    &= \sum_{\varnothing \neq I \subset \{1,\ldots,d\}} (-1)^{|I|-1}
        \bigl\{
            s \ssum_I x_i + sg(s) \ssum_I k(x_i) - sg(s) k \bigl( \ssum_I x_i \bigr)
        \bigr\} + o\{sg(s)\}
\end{align*}
as $s \downarrow 0$. Now for every vector $\yb \in \RR^d$,
\[
    \sum_{\varnothing \neq I \subset \{1,\ldots,d\}} (-1)^{|I|-1}
    \ssum_{i \in I} y_i
    = \sum_{i=1}^d 
    \biggl( \sum_{i \in I \subset \{1, \ldots, d\}} (-1)^{|I|-1} \biggr) y_i
    = 0.
\]
Combine the final two displays to arrive at
\begin{align*}
    \lefteqn{
    \Pr[\sbigcap_{i=1}^d \{ U_i \geq 1 - sx_i \}]
    } \\
    &= sg(s) \sum_{\varnothing \neq I \subset \{1,\ldots,d\}} (-1)^{|I|}
    k(\ssum_I x_i) + o\{sg(s)\},
    \qquad s \downarrow 0.
\end{align*}
This yields the first expression for $r(\xb)$. The second expression for $r(\xb)$ follows from Lemma~\ref{L:calculus} applied to the function $k$; note that $(-D)k(x) = -\log(x)-1$ and $(-D)^d k(x) = (d-2)!x^{-(d-1)}$ for all integer $d \geq 2$. \halmos

\small
\appendix

\section{Regular variation of convex functions}
\label{S:RV}

For $0 < x < \infty$, define $x^\infty$ by $\infty$, $1$, or $0$ according to whether $x$ is larger than, equal to, or smaller than $1$, respectively; similarly, define $x^{-\infty}$ by $0$, $1$, or $\infty$ according to whether $x$ is larger than, equal to, or smaller than $1$, respectively.

A positive, measurable function $f$ defined in a right neighbourhood of zero is said to be {\em regularly varying at zero (from the right) of index $\tau \in [-\infty,\infty]$} if $f(tx)/f(t) \to x^\tau$ as $t \downarrow 0$ for all $0<x<\infty$. In case $\tau$ is equal to zero, then $f$ is said to be {\em slowly varying} at zero. Similarly, a positive, measurable function $f$ defined in some neighbourhood of infinity is called {\em regularly varying at infinity of index $\tau \in [-\infty,\infty]$} if $f(tx)/f(t) \to x^\tau$ as $t\to\infty$ for every positive $x$. In case $\tau$ is equal to zero, then $f$ is said to be {\em slowly varying} at infinity. Clearly, a function $f$ is regularly varying at zero of index $\tau$ if and only if the function $t \mapsto f(1/t)$ is regularly varying at infinity of index $-\tau$.
 
The definition of regular variation involves in principle an infinite set of limit relations. However, if a function is known to be convex, then regular variation of the function is equivalent to a single limit relation. Results of this type are known under the name ``Monotone Density Theorem'' \citep[section~1.7.3]{BGT}. We will need the following instance.

\begin{lem}
\label{L:RV}
Let $f$ be a positive, convex function of a real variable defined in a right-neighbourhood of zero. Let $f'$ be a nondecreasing version of the Radon-Nikodym derivative of $f$. The function $f$ is regularly varying at zero of index $\tau \in [-\infty,\infty]$ if and only if
\[
    \lim_{s \downarrow 0} \frac{s f'(s)}{f(s)} = \tau.
\]
\end{lem}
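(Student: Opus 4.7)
The plan is to work from the identity
\[
    \log \frac{f(sx)}{f(s)} = \int_1^x \frac{sv \, f'(sv)}{f(sv)} \, \frac{\dv}{v},
\]
which holds for $0 < x < \infty$ and all sufficiently small $s > 0$, because $f$ is positive and convex and therefore locally absolutely continuous.

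For the implication ``$\Leftarrow$'' I would assume $s f'(s)/f(s) \to \tau$ and analyse the integrand on the compact $v$-interval between $1$ and $x$. When $\tau$ is finite, the integrand converges pointwise to $\tau$ and is eventually bounded by $|\tau|+1$ uniformly in $v$ on that interval, so dominated convergence yields $\log(f(sx)/f(s)) \to \tau \log x$, that is, $f(sx)/f(s) \to x^\tau$. When $\tau = +\infty$, for every $M > 0$ the integrand eventually exceeds $M$ uniformly on the interval, so the integral is at least $M \log x$ for $x > 1$ and at most $-M \log(1/x)$ for $x < 1$; sending $M \to \infty$ gives $f(sx)/f(s) \to x^\infty$. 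The case $\tau = -\infty$ is symmetric.

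For ``$\Rightarrow$'' I would use the monotonicity of $f'$ inherited from convexity. The slope inequalities give, for every $0 < \lambda < 1 < \mu$,
\[
    \frac{1 - f(\lambda s)/f(s)}{1 - \lambda}
    \leq \frac{s f'(s)}{f(s)}
    \leq \frac{f(\mu s)/f(s) - 1}{\mu - 1}.
\]
Letting $s \downarrow 0$ and using the regular-variation hypothesis replaces $f(\lambda s)/f(s)$ and $f(\mu s)/f(s)$ by $\lambda^\tau$ and $\mu^\tau$. For finite $\tau$, sending $\lambda \uparrow 1$ and $\mu \downarrow 1$ squeezes both bounds to $\tau$. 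For $\tau = +\infty$ the upper bound degenerates, but $\lambda^\tau = 0$ yields $\liminf s f'(s)/f(s) \geq 1/(1-\lambda)$, which diverges as $\lambda \uparrow 1$; the case $\tau = -\infty$ is handled symmetrically by $\mu^\tau = 0$.

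The main technical nuisance is the bookkeeping in the $\tau = \pm\infty$ cases, where the two-sided squeeze collapses to a one-sided divergence estimate; one also has to verify in the forward direction that the convergence $sv f'(sv)/f(sv) \to \tau$ is uniform in $v$ over the compact $v$-interval of integration so that dominated convergence (or the uniform lower bound used in the infinite case) is legitimate.
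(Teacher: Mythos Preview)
Your proposal is correct and follows essentially the same route as the paper: the forward direction uses the identity $\log\bigl(f(sx)/f(s)\bigr)=\int_1^x \tau(sv)\,\dv/v$ with $\tau(t)=t f'(t)/f(t)$, and the backward direction uses convexity-based slope bounds (the paper writes the single tangent-line inequality $f(sx)-f(s)\geq s(x-1)f'(s)$ and then divides by $x-1$, which is equivalent to your two-sided secant bound) followed by letting the free point tend to $1$. Your explicit case analysis for $\tau=\pm\infty$ and your remark about uniform convergence of $\tau(sv)$ on compact $v$-intervals are points the paper leaves implicit, but the argument is the same.
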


\begin{proof}
Let $c$ be a positive number such that the domain of $f$ includes the interval $(0,c]$. The function $\log f$ is absolutely continuous with Radon-Nikodym derivative $f'/f$. Denote $\tau(s) = s f'(s) / f(s)$. For $0 < s \leq c$, we have
\[
    f(s) = f(c) \exp \biggl( - \int_s^c \tau(t) \frac{\mathrm{d}t}{t} \biggr).
\]
If additionally $0<x<\infty$ with $x \neq 1$ and if $s$ is such that also $sx \leq c$, then
\[
    \frac{f(sx)}{f(s)}
    = \exp \biggl( \int_s^{sx} \tau(t) \frac{\mathrm{d}t}{t} \biggr)
    = \exp \biggl( \int_1^x \tau(st) \frac{\mathrm{d}t}{t} \biggr).
\]
The argument of the exponent converges to $\tau \log(x)$ as $s \downarrow 0$. Hence $f(sx) / f(s) \to x^\tau$ as $s \downarrow 0$, as required.

Conversely, suppose that $f$ is regularly varying at zero of index $\tau$. By convexity, for all $0 < x < \infty$ and all sufficiently small $s$,
\[
    f(sx) - f(s) \geq s (x - 1) f'(s).
\]
Divide both sides of this inequality by $(x-1)$ and let $s$ decrease to zero to get 
\[
	\begin{array}{rcl@{\quad}l}
    \dsty \limsup_{s \downarrow 0} \frac{s f'(s)}{f(s)}
    &\leq& \dsty \frac{x^\tau-1}{x-1}, & \mbox{for all $1 < x < \infty$;} \\
    \dsty \liminf_{s \downarrow 0} \frac{s f'(s)}{f(s)}
    &\geq& \dsty \frac{x^\tau-1}{x-1}, & \mbox{for all $0 < x < 1$}.
    \end{array}
\]
Since $\lim_{x \to 1} (x^\tau-1)/(x - 1) = \tau$ for all $\tau \in [-\infty,\infty]$, indeed $s f'(s) / f(s) \to \tau$ as $s \downarrow 0$.
\end{proof}

\section{Some useful formulas}
\label{S:form}

\begin{lem}
\label{L:calculus}
Let $k$ be positive integer, $\varnothing \neq I \subset \RR$ be an open interval, and $f : I \to \RR$ be a $(k-1)$ times continuously differentiable function. If $D^{k-1} f$ is absolutely continuous with Radon-Nikodym derivative $D^k f$, then for every $x \in I$ and $(x_1,\ldots,x_k) \in [0,\infty)^k$ for which $x + x_1 + \cdots + x_k \in I$,
\[
    \sum_{K \subset \{1,\ldots,k\}} (-1)^{|K|} f(x + \ssum_{i \in K} x_i)
    = \int_0^{x_1} \cdots \int_0^{x_k} (-D)^k f (x + t_1 + \cdots + t_k)
    \dt_k \cdots \dt_1.
\]
\end{lem}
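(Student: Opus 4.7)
The natural approach is induction on $k$, with the inductive step using the fundamental theorem of calculus once (peeling off the last variable) followed by Fubini's theorem to slot in the induction hypothesis.

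For the base case $k=1$, the sum on the left is $f(x) - f(x+x_1)$, and since $D^0 f = f$ is absolutely continuous (being continuous with absolutely continuous distributional derivative $Df$), the fundamental theorem of calculus gives $\int_0^{x_1} (-D) f(x+t_1) \, \rmd t_1 = f(x) - f(x+x_1)$, matching.

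For the inductive step, assume the identity holds for $k-1$ and for any $f$ with the appropriate smoothness. Given $f$ satisfying the hypothesis for $k$, split the sum on the left according to whether $k \in K$:
\[
    \sum_{K \subset \{1,\ldots,k\}} (-1)^{|K|} f(x + \ssum_{i \in K} x_i)
    = \sum_{K' \subset \{1,\ldots,k-1\}} (-1)^{|K'|} \bigl[ f(x + \ssum_{K'} x_i) - f(x + x_k + \ssum_{K'} x_i) \bigr].
\]
Each bracketed difference equals $\int_0^{x_k} (-D) f(x + t_k + \sum_{K'} x_i) \, \rmd t_k$ by the fundamental theorem of calculus (valid because $f$ is $C^{k-1}$ with $k \geq 2$, hence $f$ is $C^1$ and $Df$ is continuous). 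Interchanging the finite sum with the integral, the left-hand side becomes
\[
    \int_0^{x_k} \sum_{K' \subset \{1,\ldots,k-1\}} (-1)^{|K'|} (-D) f(x + t_k + \ssum_{K'} x_i) \, \rmd t_k.
\]
Now apply the induction hypothesis to the function $(-D) f$ at the point $x + t_k$, with the $k-1$ increments $x_1, \ldots, x_{k-1}$. The hypothesis is applicable because $(-D) f$ is $(k-2)$ times continuously differentiable and $D^{k-2}(-D) f = -D^{k-1} f$ is absolutely continuous, with its Radon-Nikodym derivative being $-D^k f$. This yields
\[
    \sum_{K'} (-1)^{|K'|} (-D) f(x + t_k + \ssum_{K'} x_i)
    = \int_0^{x_1} \cdots \int_0^{x_{k-1}} (-D)^k f(x + t_1 + \cdots + t_k) \, \rmd t_{k-1} \cdots \rmd t_1.
\]
Substituting and invoking Fubini's theorem (legitimate since $D^k f$ is locally integrable and the domain of integration is a bounded box) to permute the $t_k$-integral past the others gives the claimed identity.

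The proof is essentially bookkeeping; the only mildly delicate point is verifying at each inductive step that the regularity hypothesis on $f$ transfers correctly to $(-D) f$ so the induction hypothesis applies. This is straightforward because differentiating once reduces both the order of continuous differentiability and the order of the absolutely continuous derivative by one, preserving the structural assumption.
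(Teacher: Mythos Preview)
Your proof is correct and follows essentially the same inductive strategy as the paper. The only cosmetic difference is the order of operations in the inductive step: you apply the fundamental theorem of calculus first to rewrite each bracket as $\int_0^{x_k}(-D)f(\cdots)\,\rmd t_k$ and then invoke the induction hypothesis on $(-D)f$, whereas the paper applies the induction hypothesis directly to the difference function $g(y)=f(y)-f(y+x_k)$ and only afterwards uses absolute continuity of $D^{k-1}f$ to express $(-D)^{k-1}g$ as $\int_0^{x_k}(-D)^k f(\cdots)\,\rmd t_k$; the two orderings are equivalent up to the Fubini step you already flagged.
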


\begin{proof}
The proof is by induction on $k$. If $k = 1$, the assumption is simply that $f$ is
absolutely continuous with Radon-Nikodym derivative $f'$, and the
formula reduces to
\[
    f(x) - f(x + x_1) = - \int_0^{x_1} f'(x + t_1) \dt_1,
\]
which is just the definition of absolute continuity. Let $k \geq 2$. Distinguish between the cases $k \in K$ and $k \not\in K$ to obtain
\begin{align*}
    \lefteqn{
    \sum_{K \subset \{1,\ldots,k\}} (-1)^{|K|} f\left(x + \ssum_{i \in K} x_i\right)
    } \\
    &= \sum_{K \subset \{1, \ldots, k-1\}} (-1)^{|K|}
        \left\{
            f(x + \ssum_{i \in K} x_i)
			- f(x + \ssum_{i \in K} x_i + x_k)
        \right\}.
\end{align*}
Fix $x_k$ and apply the induction hypothesis to the function $y
\mapsto g(y) = f(y) - f(y + x_k)$ to arrive at
\begin{align*}
    \lefteqn{
    \sum_{K \subset \{1, \ldots, k\}} (-1)^{|K|} f(x + \ssum_{i \in K} x_i)
    } \\
    &= \int_0^{x_1} \cdots \int_0^{x_{k-1}}
            (-D)^{k-1} g (x + t_1 + \cdots + t_{k-1})
        \dt_1 \cdots \dt_{k-1}.
\end{align*}
Since $D^{k-1} f$ is absolutely continuous with Radon-Nikodym derivative $D^k f$, the integrand in the previous display is equal to
\begin{align*}
    \lefteqn{
    (-D)^{k-1} g(x + t_1 + \cdots + t_{k-1})
    } \\
    &= (-D)^{k-1} f (x + t_1 + \cdots + t_{k-1})
        - (-D)^{k-1} f (x + t_1 + \cdots + t_{k-1} + x_k) \\
    &= \int_0^{x_k} (-D)^k f (x + t_1 + \cdots + t_k) \dt_k.
\end{align*}
Combine the two previous displays to arrive at the stated formula.
\end{proof}

\begin{lem}
\label{L:volume}
Let $\ub, \vb \in [0, 1]^d$ be such that $\boldsymbol{0} \neq \ub < \vb$ and write $J = \{ j : u_j > 0 \} \neq \varnothing$. If $\phi\inv$ is $|J|-1$ times continuously differentiable and if $D^{|J|-1} \phi\inv$ is absolutely continuous with Radon-Nikodym derivative $D^{|J|} \phi\inv$, then
\begin{align}
\label{E:volume}
	\lefteqn{
	\Pr[\sbigcap_{j = 1}^d \{u_j < U_j \leq v_j\}]
	} \\
	&= \int_{\prod_{j \in J} [\phi(v_j), \phi(u_j)]}
    (-D)^{|J|} \phi\inv
        \bigl( \ssum_{j \in J^c} \phi(v_j) + \ssum_{j \in J} y_j \bigr)
    \mathrm{d} (y_j)_{j \in J}. \nonumber
\end{align}
\end{lem}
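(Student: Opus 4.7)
The plan is to derive \eqref{E:volume} in three steps: expand the box probability by inclusion--exclusion, discard the vanishing terms, then convert the remaining alternating sum into the claimed iterated integral by means of Lemma~\ref{L:calculus}.

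First, the standard identity for rectangle probabilities in terms of a distribution function gives
\[
    \Pr[\sbigcap_{j=1}^d \{u_j < U_j \leq v_j\}]
    = \sum_{K \subset \{1, \ldots, d\}} (-1)^{|K|} C(w^K),
\]
where $w^K_j := u_j$ for $j \in K$ and $w^K_j := v_j$ for $j \notin K$. Whenever $K \cap J^c \neq \varnothing$, the vector $w^K$ has a zero coordinate (since $u_j = 0$ for $j \in J^c$), and hence $C(w^K) = 0$; indeed, $\phi(0) + \text{(nonnegative)} \geq \phi(0)$ while $\phi\inv$ vanishes on $[\phi(0), \infty]$. The sum therefore collapses to
\[
    \sum_{K \subset J} (-1)^{|K|}
    \phi\inv \bigl(
        \ssum_{j \in J^c} \phi(v_j) + \ssum_{j \in K} \phi(u_j) + \ssum_{j \in J \setminus K} \phi(v_j)
    \bigr).
\]

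The final step is to apply Lemma~\ref{L:calculus} to $f = \phi\inv$ with $k = |J|$, base point $x := \sum_{j \in J^c} \phi(v_j) + \sum_{j \in J} \phi(v_j)$, and increments $x_j := \phi(u_j) - \phi(v_j) \geq 0$ for $j \in J$; the differentiability assumptions of that lemma are exactly those placed on $\phi\inv$ in the statement of Lemma~\ref{L:volume}. The substitution $y_j = \phi(v_j) + t_j$ then sends the product $\prod_{j \in J}[0, \phi(u_j) - \phi(v_j)]$ to $\prod_{j \in J}[\phi(v_j), \phi(u_j)]$, yielding the right-hand side of \eqref{E:volume}. The only delicate point in the whole argument is the bookkeeping in the middle step: one has to verify that the terms killed by $\phi\inv \equiv 0$ on $[\phi(0), \infty]$ are \emph{precisely} those with $K \cap J^c \neq \varnothing$, so that the surviving sum is indexed by $K \subset J$ and matches the combinatorial identity of Lemma~\ref{L:calculus} exactly, with the remaining index set $J^c$ contributing only the inert additive constant $\sum_{j \in J^c} \phi(v_j)$.
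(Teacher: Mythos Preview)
Your proof is correct and follows essentially the same route as the paper's: inclusion--exclusion, restriction of the sum to $K\subset J$ using groundedness of $C$, application of Lemma~\ref{L:calculus} with base point $\sum_{j=1}^d \phi(v_j)$ and increments $\phi(u_j)-\phi(v_j)$, then the substitution $y_j=\phi(v_j)+t_j$. The only differences are cosmetic (notation and the explicit spelling-out of why the $K\not\subset J$ terms vanish).
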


\begin{proof}
By the inclusion-exclusion formula,
\begin{align*}
	\lefteqn{
	\Pr[\sbigcap_{j = 1}^d \{u_j < U_j \leq v_j\}]
	} \\
	&= \sum_{I \subset \{1, \ldots, d\}} (-1)^{|I|} 
	\Pr[ \sbigcap_{i \in I} \{ U_i \leq u_i \} \cap \sbigcap_{i \in I^c} \{ U_i \leq v_i \}].
\end{align*}
Since $u_i = 0$ if $i \not\in J$, the summation can be restricted to $I \subset J$, whence
\begin{align*}
	\Pr[\sbigcap_{j = 1}^d \{u_j < U_j \leq v_j\}]
	&= \sum_{I \subset J} (-1)^{|I|} 
	\Pr[ \sbigcap_{i \in I} \{ U_i \leq u_i \} \cap \sbigcap_{i \in I^c} \{ U_i \leq v_i \}] \\	
	&= \sum_{I \subset J} (-1)^{|I|} 
	\phi\inv \bigl( \ssum_{i \in I} \phi(u_i) + \ssum_{i \in I^c} \phi(v_i) \bigr).
\end{align*}
Denote $\Delta_j = \phi(u_j) - \phi(v_j)$ for $j \in J$; note that $0 < \Delta_j < \infty$. We have
\[
	\Pr[\sbigcap_{j = 1}^d \{u_j < U_j \leq v_j\}]
    = \sum_{I \subset J} (-1)^{|I|}
    \phi\inv \bigl( \ssum_{i=1}^d \phi(v_i) + \ssum_{i \in I} \Delta_i \bigr).
\]
Apply Lemma~\ref{L:calculus} to see that
\begin{align*}
	\Pr[\sbigcap_{j = 1}^d \{u_j < U_j \leq v_j\}]
	&= \int_{\prod_J [0, \Delta_j]} (-D)^{|J|} \phi\inv \bigl( \ssum_{i=1}^d \phi(v_i) + \ssum_{j \in J} t_j \bigr) \mathrm{d}(t_j)_{j \in J}.
\end{align*}
Finally, change variables $y_j = \phi(v_j) + t_j$ to arrive at \eqref{E:volume}.
\end{proof}

\section*{Acknowledgments}

The authors gratefully acknowledge inspiring discussions with Christian Genest as well as constructive comments by three referees and an associate editor.


\end{document}